\apptocmd{\thebibliography}{\raggedright}{}{}
\newtheorem{theorem}{Theorem}[section]
\newtheorem{lemma}[theorem]{Lemma}
\newtheorem{proposition}[theorem]{Proposition}
\newtheorem{corollary}[theorem]{Corollary}
\newtheorem{remark}{Remark}
\newtheorem{definition}[theorem]{Definition}
\newcommand*{\llongrightarrow}{\ensuremath{\joinrel\relbar\joinrel\relbar\joinrel\relbar\joinrel\relbar\joinrel\relbar\joinrel\relbar\joinrel\relbar\joinrel\rightarrow}}
\title{Local constancy for reductions of two-dimensional crystalline representations}
\author{Emiliano Torti \\ University of Luxembourg\\ emiliano.torti@uni.lu or torti.emiliano@gmail.com}
\begin{document}
	
	\maketitle

\begin{abstract}
We prove the existence of local constancy phenomena for reductions in a general prime power setting of two-dimensional irreducible crystalline representations. Up to twist, these representations depend on two parameters: a trace $a_p$ and a weight $k$. We find an (explicit) local constancy result with respect to $a_p$ using Fontaine's theory of $(\varphi, \Gamma)$-modules and its crystalline refinement due to Berger via Wach modules and their continuity properties. The local constancy result with respect to $k$ (for $a_p\not=0$) will follow from a local study of Colmez's rigid analytic space parametrizing trianguline representations. This work extends some results of Berger obtained in the semi-simple residual case. 
\end{abstract}

\section{Introduction}
Crystalline representations play a central role in the study of $p$-adic representations of the local absolute Galois group $G_{\mathbb{Q}_p}:= \text{Gal}(\bar{\mathbb{Q}}_p / \mathbb{Q}_p)$ (see, for example, some density results due to Berger (see Thm. IV.2.1 in \cite{Ber04}),  Chenevier (see Thm. A in \cite{Che13}), Colmez (see sec. 5.1 in \cite{Col08}), and Kisin (see Thm. 0.3 in \cite{Kis10})).
We are interested in studying irreducible crystalline representations of $G_{\mathbb{Q}_p}$ of dimension two and in particular their reductions modulo prime powers.\\
Let $p$ be an odd prime, let $k\geq2$ be an integer and $a_p \in \mathfrak{m}_\mathbb{E}$ where $\mathbb{E}$ is a finite extension of $\mathbb{Q}_p$, $\mathfrak{m}_\mathbb{E}$ denotes the maximal ideal of the ring of integers $\mathcal{O}_\mathbb{E}$ with residue field $k_\mathbb{E}$. Fix once and for all a choice of a uniformizer, say $\pi$. Let $D_{k, a_p}:=\mathbb{E} e_1 \oplus \mathbb{E} e_2$ be the filtered $\varphi$-module whose structure is given by:
$$
\varphi=\begin{pmatrix} 0 & -1 \\ p^{k-1} & a_p \end{pmatrix} \;\;\;\text{ and a filtration }
\text{Fil}^{i} (D_{k, a_p})= 
\begin{cases} 
D_{k, a_p} & \text{ if } i\leq 0 \\ 
\mathbb{E} e_1 & \text{ if } 1 \leq i \leq k-1 \\
0 & \text{ if } i \geq k
\end{cases}
$$
By a theorem of Colmez and Fontaine (see Thm. A in \cite{CF00}), there exists a unique crystalline irreducible $\mathbb{E}$-linear representation $V_{k, a_p}$ of dimension two, with Hodge-Tate weights $\{0, k-1\}$ such that $D_{\text{cris}} (V_{k, a_p}^*)=D_{k, a_p}$, where $V_{k, a_p}^*$ denotes the $\mathbb{E}$-linear dual representation of $V_{k, a_p}$. By a result of Breuil (see Prop. 3.1.1 in \cite{Breu03}), up to twist, any irreducible two-dimensional crystalline representation is isomorphic to $V_{k, a_p}$ for some $k\geq 2$ and $a_p \in \mathfrak{m}_\mathbb{E}$.\\
These results gives rise to the natural questions of whether it is possible to completely classify $V_{k, a_p}$ in terms of $k$ and $a_p$, and how $V_{k, a_p}$ varies when the parameters $k$ and $a_p$ vary $p$-adically.\\
In general, classifying the representations $V_{k, a_p}$ in characteristic zero turns out to be an hard problem even though some progress have been made in particular cases via the local Langlands correspondence\\
(e.g. see \cite{pas09}).
 Nevertheless, much progress has been made in describing the semi-simple residual reductions of the representations $V_{k, a_p}$ using different approaches. We will briefly recall the state of art in the residual case.   
Consider the $\mathbb{E}$-linear representation $V_{k, a_p}$ and denote by $T_{k, a_p}$ a $G_{\mathbb{Q}_p}$-stable lattice, we have an isomorphism $T_{k, a_p}\otimes_{\mathcal{O}_{\mathbb{E}}} \mathbb{E}\cong V_{k, a_p}$ of $G_{\mathbb{Q}_p}$-modules. Denote by $\overline{V}_{k, a_p}$ the semi-simplification of $T_{k, a_p}\otimes_{\mathcal{O}_\mathbb{E}} k_{\mathbb{E}}$; by the Brauer-Nesbitt's theorem, the representation $\overline{V}_{k, a_p}$ does not depend on the chosen $G_{\mathbb{Q}_p}$-stable lattice $T_{k, a_p}$.\\
The problem of describing the representations $\overline{V}_{k, a_p}$ has been deeply studied by many authors via the $p$-adic and mod $p$ Langlands correspondence (see for example \cite{Breu03} and  \cite{BG09}), via Fontaine's theory of $(\varphi, \Gamma)$-modules and its crystalline refinement via Wach modules (see for example \cite{BLZ04}) or via deformation theory (see for example \cite{Roz17})). However, the problem of classifying them is still open and only partial results are known (see for example \cite{BLZ04}, \cite{BG15}, \cite{BGR18}) although partial conjectures have been formulated (see Conj. 1.5 in \cite{Breu03} and Conj. 1.1 in \cite{Gha19}).\\		
In order to try to describe the reductions $\overline{V}_{k, a_p}$, one different approach consisted in finding isomorphisms between different residual representations of the form $\overline{V}_{k, a_p}$ when we let $k$ and $a_p$ vary $p$-adically. This approach has been developed by Berger et al. with the so-called local constancy results both in the trace and in the weight (see Thm. A and Thm. B in \cite{Ber12} and for for the case $a_p=0$ see Thm. 1.1.1 in \cite{BLZ04}). \\
The purpose of this article is to extend Berger's result to a prime power setting. The main difficulty lies in keeping track of the Galois stable lattices involved in the congruences because no semi-simplification process is, a priori, allowed (or defined) for general reductions modulo prime powers. Hence, in proving the existence of such congruences, a dependency on a choice of the Galois stable lattices is expected; but as we will see later, in some cases, the result will be independent of such choice. \\
The first results of the article is the following local constancy result with respect to the trace, i.e. we fix the weight and we late the trace of the crystalline Frobenius vary $p$-adically:
\begin{theorem}\emph{(Local constancy with respect to $a_p$)}\\
\label{AAA}
Let $a_p, a'_p \in \mathfrak{m}_\mathbb{E}$ and $k\geq 2$ be an integer. Let $m\in \frac{1}{e} (\mathbb{Z}_{\geq 1})$ such that $v (a_p - a'_p)\geq 2\cdot v (a_p) + \alpha(k-1)+m$, then for every $G_{\mathbb{Q}_p}$-stable lattice $T_{k, a_p}$ inside $V_{k, a_p}$ there exists a $G_{\mathbb{Q}_p}$-stable lattice $T_{k, a'_p}$ inside $V_{k, a'_p}$ such that 
$$T_{k, a_p}\otimes_{\mathcal{O}_\mathbb{E}} \mathcal{O}_\mathbb{E} / (p^m) \cong T_{k, a'_p} \otimes_{\mathcal{O}_\mathbb{E}} \mathcal{O}_\mathbb{E} / (p^m) \text{ as } G_{\mathbb{Q}_p}\text{-modules;}$$
where $\alpha(k-1)=\sum_{n\geq 1} \lfloor \frac{k-1}{p^{n-1} (p-1)}\rfloor$.
\end{theorem}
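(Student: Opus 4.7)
The plan is to translate the desired congruence of Galois-stable lattices into a congruence of Wach modules, where controlled perturbation is available. To each $G_{\mathbb{Q}_p}$-stable lattice $T$ inside a positive crystalline $\mathbb{E}$-linear representation one attaches, via the theory of Fontaine and its refinement by Berger, a Wach module $N(T)$ over $\mathbf{A}_{\mathbb{Q}_p}^+ = \mathcal{O}_\mathbb{E}[[\pi]]$ equipped with commuting semi-linear actions of $\varphi$ and $\Gamma$. The functor $T \mapsto N(T)$ is an exact equivalence of categories and is continuous for the $p$-adic topology, so a congruence $N(T_{k,a_p}) \equiv N' \pmod{p^m}$ of $(\varphi,\Gamma)$-modules over $\mathcal{O}_\mathbb{E}[[\pi]]/(p^m)$ translates directly into a congruence of lattices. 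Hence it suffices, starting from an arbitrary $T_{k,a_p}$, to exhibit an integral Wach sub-module of $N(V_{k,a_p'})$ whose matrices of $\varphi$ and $\Gamma$ agree modulo $p^m$ with those of $N(T_{k,a_p})$.

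The next step is to invoke Berger's explicit reconstruction of the rational Wach module $N(V_{k,a_p})[1/p]$ from the filtered $\varphi$-module $D_{k,a_p}$. Applying his iterative algorithm to the Frobenius matrix
\[
\begin{pmatrix} 0 & -1 \\ p^{k-1} & a_p \end{pmatrix}
\]
produces a basis of $N(V_{k,a_p})[1/p]$ in which the matrices $P_{a_p}$ of $\varphi$ and $G_{a_p}$ of a topological generator of $\Gamma$ lie in $M_2(\mathbf{A}_{\mathbb{Q}_p}^+[1/p])$ with $p$-adic denominators bounded by $p^{\alpha(k-1)}$; the exponent $\alpha(k-1)$ appears because during the iteration one must invert powers of elements of the form $\varphi^{n}(\pi)/\pi$, whose $p$-adic valuations are precisely $\lfloor (k-1)/(p^{n-1}(p-1))\rfloor$. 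Since the iteration depends rigid-analytically on $a_p$, a direct estimate yields
\[
P_{a_p} \equiv P_{a_p'} \quad \text{and} \quad G_{a_p} \equiv G_{a_p'} \pmod{p^{v(a_p - a_p') - \alpha(k-1)}}.
\]

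It remains to descend from the rational Wach module to the integral lattice $N(T_{k,a_p})$. Each $G_{\mathbb{Q}_p}$-stable lattice in $V_{k,a_p}$ corresponds to an integral sub-Wach module of $N(V_{k,a_p})[1/p]$ obtained from the canonical one by conjugation by a matrix $C \in \mathrm{GL}_2(\mathbf{A}_{\mathbb{Q}_p}^+[1/p])$; classifying these lattices shows that both $C$ and $C^{-1}$ may be chosen with denominators of valuation at most $v(a_p)$, the $a_p$-dependence being forced by the shape of the $\pi$-adic reduction of $P_{a_p}$. Transporting the same matrix $C$ to $N(V_{k,a_p'})[1/p]$ and conjugating $P_{a_p'}$ and $G_{a_p'}$ by it yields a candidate lattice $N'$; because the conjugation costs at most $2v(a_p)$ (one factor for $C$, one for $C^{-1}$) on each of the $\varphi$- and $\Gamma$-estimates, the hypothesis $v(a_p-a_p') \geq 2v(a_p) + \alpha(k-1) + m$ is exactly what is needed to secure both integrality of $N'$ and congruence of its structural matrices with those of $N(T_{k,a_p})$ modulo $p^m$. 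The main obstacle is this last denominator bookkeeping step: in Berger's semi-simple setting one could replace a given lattice by a convenient one through residual semi-simplification, but here no such replacement is available, so one must track the conjugation matrix $C$ lattice by lattice and verify that the perturbation survives integrally — this is precisely what forces the characteristic $2v(a_p)$ loss in the hypothesis.
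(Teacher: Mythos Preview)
Your overall framework --- pass to Wach modules, produce a congruence there, and pull it back via Fontaine's equivalence --- is the paper's framework. But the \emph{mechanism} by which you propose to produce the Wach-module congruence is quite different from the paper's, and the two crucial estimates in your version are asserted, not proved.

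The paper never constructs a ``canonical'' rational Wach module $N(V_{k,a_p})[1/p]$ and never compares $P_{a_p}$ to $P_{a_p'}$ as analytic functions of $a_p$. It works intrinsically: start from the integral Wach module $N(T_{k,a_p}^*)$ of the \emph{given} dual lattice, with matrices $P$ and $G$, and perturb $P$ multiplicatively to $P'=(\mathrm{Id}+H)P$. The three constants in the hypothesis then have separate, concrete sources. The $2v(a_p)$ comes from diagonalising $P(0)$ (Lemma~4.1 and Corollary~4.2): the eigenvalue gap $\delta$ controls the denominator in the diagonalisation, and to find $H_0\in p^{\alpha(k-1)+m}M_2(\mathcal{O}_\mathbb{E})$ with $\det(\mathrm{Id}+H_0)=1$ and $\mathrm{Tr}(H_0 P(0))=a_p'-a_p$ one needs $v(a_p'-a_p)\ge 2v(\delta)+\alpha(k-1)+m$. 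The $\alpha(k-1)$ comes from extending the constant $H_0$ to a power series $H\in p^m M_2(\mathcal{O}_\mathbb{E}[[x]])$ satisfying $HG\equiv G\gamma(H)\bmod x^k$ (Proposition~4.3), where one must divide successively by $1-\chi(\gamma)^r$ for $r=1,\dots,k-1$; this is a $\Gamma$-side phenomenon, not a $\varphi$-side one. Finally, a separate $x$-adic iteration (Proposition~4.4) rebuilds a compatible $G'\equiv G\bmod p^m$. No lattice classification and no explicit formula for any Wach module is needed.

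Your alternative rests on two claims you do not justify. First, that Berger's iterative construction produces matrices with denominator exactly $p^{\alpha(k-1)}$, varying in $a_p$ with modulus $p^{v(a_p-a_p')-\alpha(k-1)}$: your attribution of $\alpha(k-1)$ to ``inverting powers of $\varphi^n(\pi)/\pi$, whose $p$-adic valuations are precisely $\lfloor(k-1)/p^{n-1}(p-1)\rfloor$'' is not correct --- those quotients are power series with constant term $p$ (or $p^n$), and the summands defining $\alpha(k-1)=\sum_j v(1-\chi(\gamma)^j)$ come from the $\Gamma$-action, not from $\varphi$. Second, that every integral Wach lattice arises from your canonical one by a conjugation $C$ with $C,C^{-1}$ having denominators of valuation at most $v(a_p)$: this is the heart of your denominator bookkeeping, yet no argument is given, and it is far from obvious (in the residually irreducible case the bound should be $0$, while in the residually reducible case the chain of lattices can be long). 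Without these two bounds you cannot verify that the transported $N'$ is integral, nor that its matrices agree with those of $N(T_{k,a_p})$ modulo $p^m$. The paper's intrinsic perturbation avoids both issues entirely.
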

This result will be obtained by deforming p-adically the Wach module attached to the representation $T_{k, a_p}$  into a new Wach module which will correspond to the representation $T_{k, a'_p}$. The result will follow from observing that in such deformations if $a_p$ and $a'_p$ are sufficiently $p$-adically close then the corresponding Wach modules will be $p$-adically close as well; we will refer to this feature as continuity property of the Wach modules. \\
The second result of the article is the following local constancy result with respect to the weight, i.e. we fix the trace of the crystalline Frobenius and we let the weight vary $p$-adically:

\begin{theorem}\emph{(Local constancy with respect to $k$)}\\
Let $a_p\in m_\mathbb{E} -\{0\}$  for some finite extension $\mathbb{E}$ of $\mathbb{Q}_p$. Let $k\geq 2$ and $m\in\frac{1}{e} (\mathbb{Z}_{\geq1})$ be fixed. Assume that
$$k \geq (3 v(a_p)+m) \cdot \Big(1-\frac{p}{(p-1)^2}\Big)^{-1} +1.$$
There exists an integer $r=r(k, a_p)\geq 1$ such that if $k'-k\in p^{r+m} (p-1)\mathbb{Z}_{\geq 0}$ then there exist $G_{\mathbb{Q}_p}$-stable lattices $T_{k, a_p}\subset V_{k, a_p}$ and $T_{k', a_p}\subset V_{k', a_p}$ such that $$T_{k, a_p}\otimes_{\mathcal{O}_\mathbb{E}} \mathcal{O}_\mathbb{E} / (p^m) \cong T_{k', a_p} \otimes_{\mathcal{O}_\mathbb{E}} \mathcal{O}_\mathbb{E} / (p^m)\text{ as }G_{\mathbb{Q}_p}\text{-modules.}$$
\end{theorem}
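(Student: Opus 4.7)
The plan is to exploit Colmez's rigid analytic space of trianguline $(\varphi,\Gamma)$-modules, using that for $a_p\ne 0$ the representation $V_{k,a_p}^*$ is trianguline over the Robba ring $\mathcal{R}$. Indeed, the Frobenius characteristic polynomial $X^2-a_pX+p^{k-1}$ has two distinct roots $\alpha(k),\beta(k)$ (in a quadratic extension of $\mathbb{E}$ if necessary); the lower bound on $k$ forces $v(a_p)<(k-1)/2$, so $v(\alpha(k))=v(a_p)$ and $v(\beta(k))=k-1-v(a_p)$. Consequently $D_{\mathrm{rig}}(V_{k,a_p}^*)$ admits a triangulation with characters $(\delta_{1,k},\delta_{2,k})$ built from the unramified characters sending geometric Frobenius to $\alpha(k),\beta(k)$ and from $\chi_{\mathrm{cyc}}^{k-1}$.

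The first step tracks the $p$-adic dependence of $(\delta_{1,k},\delta_{2,k})$ on $k$. Using the fixed-point equation $\alpha=a_p-p^{k-1}/\alpha$ and Hensel-style iteration, I would show $\alpha(k)\equiv\alpha(k')\pmod{p^{k-1-v(a_p)}}$ for $k'\geq k$, and similarly for $\beta$. In parallel, the hypothesis $k'-k\in p^{r+m}(p-1)\mathbb{Z}_{\geq 0}$ yields $\chi_{\mathrm{cyc}}^{k-1}(g)\equiv\chi_{\mathrm{cyc}}^{k'-1}(g)\pmod{p^{r+m+1}}$ for every $g\in G_{\mathbb{Q}_p}$, because $(p-1)p^{r+m}$-th powers in $\mathbb{Z}_p^\times$ lie in $1+p^{r+m+1}\mathbb{Z}_p$. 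Together these make $(\delta_{1,k'},\delta_{2,k'})$ arbitrarily $p$-adically close to $(\delta_{1,k},\delta_{2,k})$ as $r$ grows.

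Next I would invoke the rigid-analytic continuity on Colmez's trianguline variety: on a small affinoid neighborhood of $(\delta_{1,k},\delta_{2,k})$ the universal trianguline $(\varphi,\Gamma)$-module interpolates the specializations at all nearby parameters. The assumption $k\geq(3v(a_p)+m)(1-p/(p-1)^2)^{-1}+1$ is needed to place us in the non-critical part of this variety, where the parameter map is \'etale and where a Wach-style integral lattice inside the Robba-ring module can be constructed and controlled. A quantitative form of this interpolation then yields an integer $r=r(k,a_p)$ such that the specializations at $(\delta_{i,k})$ and $(\delta_{i,k'})$ agree on suitable $\mathcal{O}_\mathbb{E}$-lattices modulo $p^m$; Fontaine's integral equivalence between \'etale $(\varphi,\Gamma)$-modules and $\mathbb{Z}_p$-representations of $G_{\mathbb{Q}_p}$ then transfers this to the desired isomorphism of $T_{k,a_p}\otimes_{\mathcal{O}_\mathbb{E}}\mathcal{O}_\mathbb{E}/(p^m)$ and $T_{k',a_p}\otimes_{\mathcal{O}_\mathbb{E}}\mathcal{O}_\mathbb{E}/(p^m)$.

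The main obstacle is the passage from the purely rigid-analytic continuity (valid over the generic fibre of Colmez's space) to an integral statement about Galois-stable lattices modulo $p^m$. Concretely, one must control how the integral structure on the universal triangulation varies in a neighborhood and bound the denominators introduced when changing triangulation from $(\delta_{i,k})$ to $(\delta_{i,k'})$. Extracting the explicit $r=r(k,a_p)$ from this analysis is the technical heart of the argument; it is also the reason why $r$ cannot be chosen uniformly in $(k,a_p)$ but is nonetheless effective for each fixed pair.
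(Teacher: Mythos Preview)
Your overall strategy---realize $V_{k,a_p}^*$ as trianguline, move on Colmez's space $\mathfrak{X}^2$, and use rigid-analytic continuity of the universal family---is the right one, and it is exactly the framework the paper uses. But there is a real gap in your plan, and it concerns precisely the point you flag as ``the main obstacle.''

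You propose to track the pair $(\delta_{1,k},\delta_{2,k})$ built from the Frobenius root $\alpha(k)$ and from $\chi_{\mathrm{cyc}}^{k-1}$. The problem is that $\alpha(k)$, which satisfies $\alpha^2-a_p\alpha+p^{k-1}=0$, does \emph{not} interpolate rigid-analytically in $k$: the quantity $p^{k-1}$ is not an analytic function on weight space. Your Hensel estimate $\alpha(k)\equiv\alpha(k')\pmod{p^{k-1-v(a_p)}}$ is a correct discrete congruence, but it does not place the points $(\delta_{1,k},\delta_{2,k})$ and $(\delta_{1,k'},\delta_{2,k'})$ on a common rigid-analytic curve in $\mathfrak{X}^2$, which is what Colmez's continuity theorem needs. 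So as written, you cannot apply the universal family argument.

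The paper closes this gap by a trick you are missing: before passing to the trianguline picture, it first applies the local constancy result in the trace (Theorem~\ref{theoremA}) to replace $V_{k,a_p}$ by $V_{k,\,a_p+p^{k-1}/a_p}$ modulo $p^m$, and likewise for $k'$. The point is that $X^2-(a_p+p^{k-1}/a_p)X+p^{k-1}$ factors as $(X-a_p)(X-p^{k-1}/a_p)$, so now the relevant Frobenius root is exactly $a_p$, \emph{independent of $k$}. Colmez's proposition then gives $V_{k,\,a_p+p^{k-1}/a_p}^*=V(\mu_{a_p},\mu_{1/a_p}\chi^{1-k},\infty)$, and only the second character moves with $k$, through $\chi^{1-k}$, which \emph{does} interpolate analytically via $s\mapsto \exp(s\log\langle x\rangle)$. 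This yields a genuine one-parameter closed immersion $\Phi:\mathbb{B}^1(0,1)^+\to\mathfrak{X}^2$ along which Colmez--Chenevier's family lives. The hypothesis $k\geq(3v(a_p)+m)(1-p/(p-1)^2)^{-1}+1$ is used precisely here: it guarantees $v(p^{k-1}/a_p)\geq 2v(a_p)+\alpha(k-1)+m$, which is the input to Theorem~\ref{theoremA}; it is not a non-criticality condition on the trianguline variety as you suggest.

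Finally, the passage to integral lattices in the paper is more elementary than your ``Wach-style integral lattice inside the Robba-ring module'': one simply takes the unit ball $\mathcal{O}_\mathfrak{U}^0$ in the affinoid algebra, finds a $G_{\mathbb{Q}_p}$-stable free $\mathcal{O}_\mathfrak{U}^0$-lattice $\mathbb{T}\subset\mathbb{V}$ by the usual profinite-group averaging argument, pulls back along $\Phi$, and then uses the Lipschitz estimate $|g(x)-g(y)|\leq p^r|g|_r|x-y|$ on $\mathbb{E}\langle T/p^r\rangle$ to conclude that specializations at $1-k$ and $1-k'$ agree modulo $p^m$ once $|k-k'|\leq p^{-(r+m)}$. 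The constant $r$ is the radius of the affinoid disk on which $\Phi$ lands inside Colmez's neighborhood $\mathfrak{U}$; this is why it depends on $(k,a_p)$ but is not made explicit.
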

The idea is to prove that the representations $V_{k, a_p}$ and $V_{k', a_p}$ are congruent modulo $p^m$ to two representations $W_k$ and $W_{k'}$ (respectively) which fit into an analytic family of trianguline representations in the sense of Berger and Colmez (see \cite{BC08}); as a consequence, the claims will follow from proving that if $k$ and $k'$ are sufficiently close in the weight space $\mathcal{W}$ then the representations $W_k$ and $W_{k'}$ are $p$-adically close as well (in a sense that will be clarified precisely later in the article). The result constitutes a converse (in the crystalline case) to a non-published theorem of Wintenberger, also proven by Berger and Colmez (see Thm 7.1.1 and Cor. 7.1.2 in \cite{BC08}) via the continuity property of the Sen periods and the Hodge-Tate weights. \\
Specializing the above theorems to the case $m=1/e$, we get a slightly stronger result then the known local constancy results in the semi-simple residual case (see Thm. A and Thm. B in \cite{Ber12}); indeed our conclusions do not involve any semi-simplification process, so for example, being residually completely reducible (i.e. direct sum of two characters modulo $p$) for some choice of lattice is also a locally constant phenomenon. \\
The motivation behind the study of local constancy phenomena modulo prime powers is two-fold. From a purely representation theoretical point of view, the interest in understanding reductions modulo prime powers of crystalline representations lies in the result of Berger on limits of crystalline representations (see \cite{Ber04}). To be more precise, Berger's result implies that if $V$ is any $p$-adic representation of $G_{\mathbb{Q}_p}$ and if $\{V_i\}_{i\in I}$ is a countable family of crystalline representations such that $T\equiv T_i \mod p^i$, where $T$ is a fixed $G_{\mathbb{Q}_p}$-stable lattice in $V$ and $T_i$ is a $G_{\mathbb{Q}_p}$-stable lattice in $V_i$, then $V$ is also crystalline.\\
Moreover, we observe that a good source of examples for the crystalline representations of the form $V_{k, a_p}$ comes from restriction at $G_{\mathbb{Q}_p}$ of Galois representations attached to classical modular forms of tame level. To be precise, let $f$ be a classical normalized cuspidal eigenform in $S_k (\Gamma_0 (N))$ where $N$ is a positive integer prime with $p$ and denote by $\rho_f$ its attached $p$-adic Galois representation constructed by Deligne and Shimura. We define by $V_p (f):={\rho_f}\big|_{G_{\mathbb{Q}_p}}$ the restriction of $\rho_f$ at the decomposition group at $p$. It is well-known (see \cite{Sch90}), that under the mild hypothesis $a_p^2 \not=4 p^{k-1}$ (see \cite{CE98}), the $G_{\mathbb{Q}_p}$-representation $V_p (f)$ is crystalline and moreover we have that $D_{\text{cris}}(V_p (f))=D_{k, a_p}$ and so $V_p (f)\cong V_{k, a_p}$ where $a_p$ is the $p$-th coefficient of the $q$-expansion of $f$. A straightforward application of the results in this article consists of using the explicit local constancy results in the trace (see Thm. \ref{AAA} and Cor. \ref{BBB}) to find upper and lower bounds for the number of non-isomorphic classes of reductions modulo prime powers of modular crystalline representations of $G_{\mathbb{Q}_p}$ coming from classical modular forms of tame level.\\
The paper is organized as follows. In Section 2, we will recall the notions of $(\varphi, \Gamma)$-module of Fontaine and of Wach module and their main properties which will be used later in the article.
In Section 3, we will recall the continuity property of Wach modules which will play a key role in the local constancy result in the trace. In Section 4, we will show how to $p$-adically deform Wach modules and we will state and prove the explicit local constancy result when the weight $k$ is fixed and we let the trace of the crystalline Frobenius $a_p$ vary. 
Finally, in Section 5, we are going to state and prove the local constancy result when we fix $a_p$ and we let the weight $k$ vary.
\bigbreak
\textbf{Acknowledgments:}  This work was prepared during a PhD at the University of Luxembourg. I would like to thank my advisor G. Wiese for the help during the writing of this article. Special thanks go to L. Berger for allowing me to visit him at the ENS in Lyon and for many enlightening conversations. \\
I would also like to thank A. Conti, L. Dembele, A. Maksoud and A. Vanhaecke for many interesting remarks.

\section{Wach modules and crystalline representations}
Let $p$ be an odd prime and let $\mathbb{E}\subseteq \overline{\mathbb{Q}}_p$ be a finite extension of $\mathbb{Q}_p$. We denote by $\mathcal{O}_\mathbb{E}$ the ring of integers of $\mathbb{E}$, by $\pi_\mathbb{E}$ a uniformizer, by $k_\mathbb{E}$ the residue field and denote by $e$ the ramification index of $\mathbb{E}$ over $\mathbb{Q}_p$. Let $\Gamma$ be a group isomorphic to $\mathbb{Z}^{\times}_p$ via a map $\chi: \Gamma \rightarrow \mathbb{Z}^{\times}_p$. Fix once and for all a topological generator of $\Gamma$ (which is procyclic as $p\not=2$), say $\gamma$. \\
Let $\mathcal{A}_{\mathbb{E}}$ be the $\pi_\mathbb{E}$-adic completion of $\mathcal{O}_\mathbb{E} [[x]][\frac{1}{x}]$.
The ring $\mathcal{A}_\mathbb{E}$ has a natural $\mathcal{O}_\mathbb{E}$-linear action of $\Gamma$ and a $\mathcal{O}_\mathbb{E}$-linear Frobenius endomorphism $\varphi$ given by the following expressions:

$$
\begin{aligned}
 &\varphi(f(x))=f((1+x)^p -1) \;\;\;\;\;\;\;\;\text{for all }f(x)\in\mathcal{A}_\mathbb{E},\\
 &\eta(f(x))=f((1+x)^{\chi(\eta)} -1) \;\;\;\;\text{for all } f(x)\in\mathcal{A}_\mathbb{E}, \forall \eta\in\Gamma.
\end{aligned}
$$
We start by recalling the following:

\begin{definition} 
An \'etale $(\varphi, \Gamma)$-module $D$ over $\mathcal{O}_\mathbb{E}$ is an $\mathcal{A}_\mathbb{E}$-module of finite type endowed with a semilinear Froebenius map $\varphi$ such that $\varphi(D)$ generates $D$ as $\mathcal{A}_\mathbb{E}$-module (this is the \'etale property) and a semilinear continuous action of $\Gamma$ which commutes with $\varphi$.\\
The category of \'etale $(\varphi, \Gamma)$-module over $\mathcal{O}_\mathbb{E}$ will be denoted by $\textbf{Mod}^{\text{\'et}}_{(\varphi, \Gamma)} (\mathcal{O}_\mathbb{E})$.
\end{definition}
Let $G_{\mathbb{Q}_p}$ be the absolute Galois group of $\mathbb{Q}_p$ and let $H_{\mathbb{Q}_p}$ be the kernel of the $p$-adic cyclotomic character $\omega: G_{\mathbb{Q}_p}\rightarrow \mathbb{Z}^{\times}_p$. Denote by $\textbf{Rep}_{\mathcal{O}_\mathbb{E}} (G_{\mathbb{Q}_p})$ the category of $\mathcal{O}_\mathbb{E}$-representations of $G_{\mathbb{Q}_p}$, i.e. the category of $\mathcal{O}_\mathbb{E}$-modules of finite type with a continuous $\mathcal{O}_\mathbb{E}$-linear action of $G_{\mathbb{Q}_p}$.\\
 By a theorem of Fontaine (see A.3.4 in \cite{Fon90}) and its generalization by Dee (see 2.2 in \cite{Dee01}) we have the following:

\begin{theorem}
\label{fontaine}
There exists an natural isomorphism
$$\mathfrak{D} : \textbf{Rep}_{\mathcal{O}_\mathbb{E}} (G_{\mathbb{Q}_p}) \rightarrow \textbf{Mod}^{\text{\'et}}_{(\varphi, \Gamma)} (\mathcal{O}_\mathbb{E})$$
given by $\mathfrak{D}(T)=(\mathcal{A}^{(\mathcal{O}_\mathbb{E})}\otimes_{\mathcal{O}_\mathbb{E}} T)^{H_{\mathbb{Q}_p}}$, where $\mathcal{A}^{(\mathcal{O}_\mathbb{E})}:=\mathcal{A}\otimes_{\mathbb{Z}_p} \mathcal{O}_\mathbb{E}$.\\
A quasi-inverse functor, which is a natural isomorphism as well, is given by:

$$\mathfrak{T} : \textbf{Mod}^{\text{\'et}}_{(\varphi, \Gamma)} (\mathcal{O}_\mathbb{E})\rightarrow \textbf{Rep}_{\mathcal{O}_\mathbb{E}} (G_{\mathbb{Q}_p})  $$
given by $\mathfrak{T}(D)=(\mathcal{A}^{(\mathcal{O}_\mathbb{E})} \otimes_{\mathcal{A}_\mathbb{E}} D)^{\varphi=1}$. 
\end{theorem}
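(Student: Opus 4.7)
The plan is to prove Theorem \ref{fontaine} by first establishing Fontaine's classical equivalence for $\mathbb{Z}_p$-coefficients, then promoting it to the $\mathcal{O}_\mathbb{E}$-linear setting via Dee's base change argument.

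The starting point is Fontaine's period ring. Inside $W(\widetilde{\mathbf{E}})$, where $\widetilde{\mathbf{E}}$ is the tilt of $\overline{\mathbb{Q}_p}$, one constructs a $G_{\mathbb{Q}_p}$- and $\varphi$-stable subring $\mathcal{A}$ containing $\mathcal{A}_{\mathbb{Q}_p}$ as its $H_{\mathbb{Q}_p}$-invariants, obtained by identifying the variable $x \in \mathcal{A}_{\mathbb{Q}_p}$ with $[\varepsilon] - 1$ for a compatible system $\varepsilon$ of $p$-power roots of unity. I would then establish (or cite from Fontaine) the two key technical inputs: first, $(\mathcal{A}/p^n)^{\varphi=1} = \mathbb{Z}/p^n\mathbb{Z}$ for all $n \geq 1$, which uses that the residue ring $\widetilde{\mathbf{E}}$ is algebraically closed of characteristic $p$ (so that $\varphi - 1$ is surjective on $\mathcal{A}/p$); and second, a Hilbert 90-type vanishing guaranteeing $H^1_{\mathrm{cont}}(H_{\mathbb{Q}_p}, \mathrm{GL}_d(\mathcal{A})) = \{1\}$, which ensures that any finite free \'etale $(\varphi,\Gamma)$-module over $\mathcal{A}_{\mathbb{Q}_p}$ trivializes after base change to $\mathcal{A}$.

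With these in hand, one defines $\mathfrak{D}(T)$ and $\mathfrak{T}(D)$ by the displayed formulas. Verifying that $\mathfrak{D}(T)$ is \'etale of the correct rank, and dually that $\mathfrak{T}(D)$ is a continuous $\mathbb{Z}_p$-representation, is a direct consequence of the two inputs above, together with a standard $p$-adic d\'evissage reducing statements to the torsion situation modulo $p$. The quasi-inverse property reduces similarly to checking that the natural comparison map $\mathcal{A} \otimes_{\mathbb{Z}_p} T \to \mathcal{A} \otimes_{\mathcal{A}_{\mathbb{Q}_p}} \mathfrak{D}(T)$ is an isomorphism, which after reduction modulo $p$ follows from a Hilbert 90 argument applied to the residual representation.

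To extend the result from $\mathbb{Z}_p$-coefficients to $\mathcal{O}_\mathbb{E}$-coefficients, I would tensor the entire construction by $\mathcal{O}_\mathbb{E}$ over $\mathbb{Z}_p$: the ring $\mathcal{A}^{(\mathcal{O}_\mathbb{E})} = \mathcal{A} \otimes_{\mathbb{Z}_p} \mathcal{O}_\mathbb{E}$ inherits the $\varphi$- and $G_{\mathbb{Q}_p}$-actions through the first factor, is $\pi_\mathbb{E}$-adically complete because $\mathcal{O}_\mathbb{E}$ is finite free over $\mathbb{Z}_p$, and satisfies $(\mathcal{A}^{(\mathcal{O}_\mathbb{E})})^{H_{\mathbb{Q}_p}} = \mathcal{A}_\mathbb{E}$. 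The functors $\mathfrak{D}$ and $\mathfrak{T}$ then transport essentially formally to the $\mathcal{O}_\mathbb{E}$-linear setting. The main obstacle I anticipate is verifying that the \'etaleness condition and finite generation of $\mathfrak{D}(T)$ as an $\mathcal{A}_\mathbb{E}$-module survive the passage from a $p$-adic to a $\pi_\mathbb{E}$-adic framework, and that continuity of the $\Gamma$-action refines appropriately; this ultimately goes through because $k_\mathbb{E}$ is a finite extension of $\mathbb{F}_p$, so the residual Fontaine equivalence still applies in the ramified setting after extending scalars.
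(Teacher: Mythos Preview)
The paper does not actually prove this theorem: it is stated as a cited result, attributed to Fontaine (A.3.4 in \cite{Fon90}) for the $\mathbb{Z}_p$-case and to Dee (2.2 in \cite{Dee01}) for the extension to $\mathcal{O}_\mathbb{E}$-coefficients, with no argument given in the body of the paper. Your proposal is therefore not competing against a proof in the paper but rather supplying one where the paper defers to the literature.

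That said, your sketch is a faithful outline of the classical Fontaine--Dee argument that the paper is citing: constructing $\mathcal{A}$ inside $W(\widetilde{\mathbf{E}})$, establishing $(\mathcal{A}/p^n)^{\varphi=1}=\mathbb{Z}/p^n\mathbb{Z}$ and the Hilbert~90 vanishing, using d\'evissage to the mod~$p$ case, and then tensoring up to $\mathcal{O}_\mathbb{E}$. This is exactly the content of the references the paper invokes, so in that sense your approach and the paper's (implicit) approach coincide. One small point worth tightening in your write-up: the paper's category $\textbf{Rep}_{\mathcal{O}_\mathbb{E}}(G_{\mathbb{Q}_p})$ consists of finite-type $\mathcal{O}_\mathbb{E}$-modules, not just free ones, so your d\'evissage should be phrased to cover torsion modules directly rather than only lattices; Dee's paper handles this, and it is precisely what the paper exploits later (see Remark~\ref{exactness}).
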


\begin{remark} \normalfont
\label{exactness}
Note that the equivalence of categories given by the above theorem preserves the objects killed by a fixed power of a chosen uniformizer. This essentially follows from the exactness of the functor $\mathfrak{D}$ (see prop. 2.1.9 in \cite{Dee01}) and so $(\pi^n_\mathbb{E})\cdot\mathfrak{D}(T)=\mathfrak{D}((\pi^n_\mathbb{E})\cdot T)$ in the category $\textbf{Mod}^{\text{\'et}}_{(\varphi, \Gamma)} (\mathcal{O}_\mathbb{E})$. Same goes for the quasi-inverse functor $\mathfrak{T}$ (see prop. 2.1.24 in \cite{Dee01}).
\end{remark}

\begin{remark}\normalfont
\label{dual}
Let $\textbf{Mod}^{\text{\'et}, \text{tors}}_{(\varphi, \Gamma)} (\mathcal{O}_{\mathbb{E}})$ and $\textbf{Mod}^{\text{\'et}, \text{free}}_{(\varphi, \Gamma)} (\mathcal{O}_{\mathbb{E}})$ be respectively the categories of torsion and free (as $\mathcal{A}_\mathbb{E}$-modules) \'etale 
$(\varphi, \Gamma)$-modules. There is a notion of Tate's dual for such \'etale $(\varphi, \Gamma)$-modules. Let $\mathcal{B}_\mathbb{E}$ be $\mathcal{A}_\mathbb{E} [\frac{1}{p}]$ and define the Tate's dual as follows (see sec. I.2 in \cite{Col10}):
$$
\begin{aligned}
&\text{if }D\in \textbf{Mod}^{\text{\'et}, \text{tors}}_{(\varphi, \Gamma)} (\mathcal{O}_{\mathbb{E}}) \text{ then }D^{*}:=\text{Hom}_{\mathcal{A}_\mathbb{E}} \Big(D, \mathcal{B}_\mathbb{E} /\mathcal{A}_\mathbb{E} \frac{dx}{1+x}\Big)\\
&\text{if }D\in \textbf{Mod}^{\text{\'et}, \text{free}}_{(\varphi, \Gamma)} (\mathcal{O}_{\mathbb{E}}) \text{ then }D^{*}:=\text{Hom}_{\mathcal{A}_\mathbb{E}} \Big(D, \mathcal{A}_\mathbb{E} \frac{dx}{1+x}\Big)\\
\end{aligned}
$$
where $\mathcal{B}_\mathbb{E} /\mathcal{A}_\mathbb{E} \frac{dx}{1+x}$ is the inductive limit of $p^{-n} \mathcal{A}_\mathbb{E} / \mathcal{A}_\mathbb{E} \frac{dx}{1+x}$ in the category $\textbf{Mod}^{\text{\'et}, \text{tors}}_{(\varphi, \Gamma)} (\mathcal{O}_{\mathbb{E}})$ and the action of $(\varphi, \Gamma)$-module is given by:
$$\gamma\Big(\frac{dx}{1+x}\Big)=\chi(\gamma)\frac{dx}{1+x} \text{    and    }\varphi\Big(\frac{dx}{1+x}\Big)=\frac{dx}{1+x}.$$
Note the important fact that these two notions of dual are compatible in the following sense: if $D\in \textbf{Mod}^{\text{\'et}, \text{free}}_{(\varphi, \Gamma)} (\mathcal{O}_{\mathbb{E}})$ then for all $n\in\frac{1}{e}(\mathbb{Z}_{\geq 1})$ we have that $D^{*} / p^n D^{*} \cong (D/ p^n D)^{*}$ (see prop I.2.5 in \cite{Col10}).
\end{remark}

Now, we are interested in crystalline representations. These are parametrized by Wach modules as previously studied by Wach and Berger. We will start again from the definition of Wach modules and then we will go through the main properties. First, we define $\mathcal{A}^{+}_\mathbb{E}=\mathcal{O}_\mathbb{E}[[x]]$ inside $\mathcal{A}_\mathbb{E}$. It inherits naturally the actions of $\varphi$ and $\Gamma$ by restriction from $\mathcal{A}_\mathbb{E}$. Note also that $\mathcal{A}_\mathbb{E}$ is obtained by taking $\pi_\mathbb{E}$-adic completion of the localization $\mathcal{O}_\mathbb{E} [[x]] [1/x]$ (at the multiplicative set $\{1, x, x^2, \dots\}$) of $\mathcal{A}^{+}_{\mathbb{E}}=\mathcal{O}_\mathbb{E} [[x]]$. Moreover, since $\mathcal{A}_\mathbb{E}$ is Noetherian, we have that $\mathcal{A}_\mathbb{E}$ is flat as $\mathcal{A}^{+}_\mathbb{E}$-module since localization and completion preserve such property.
Following Berger (see \cite{Ber12}), we have the following

\begin{definition} 
A Wach module of height $h$ is a free $\mathcal{A}^{+}_\mathbb{E}$-module of finite rank endowed with commutative $\mathcal{A}^{+}_{\mathbb{E}}$-semilinear actions of a Frobenius map $\varphi$ and of the group $\Gamma$ such that:
$$
\begin{aligned}
(1)\;\;\;& D(N):=\mathcal{A}_\mathbb{E} \otimes_{\mathcal{A}^{+}_\mathbb{E}} N \;\in\; \textbf{Mod}^{\text{\'et}}_{(\varphi, \Gamma)} (\mathcal{O}_\mathbb{E}),\\
(2)\;\;\;&\Gamma \text{ acts trivially on }N/xN,\\
(3)\;\;\;&N/\varphi^{*}(N) \text{ is killed by }Q^h,
\end{aligned}
$$
where $\varphi^{*}(N)$ denotes the $\mathcal{A}^{+}_\mathbb{E}$-module generated by $\varphi(N)$, and $Q=\frac{(1+x)^p-1}{x} \in\mathcal{A}^{+}_\mathbb{E}$.
\end{definition}
We recall that the Wach modules are the right objects to specialize Fontaine's equivalence to crystalline representations. Indeed, we have the following (see Prop. 1.1 in \cite{Ber12}):

\begin{proposition} 
Let $N$ be a Wach module of height $h$, then the $\mathbb{E}$-linear representation $\mathfrak{V}(N):=\mathbb{E} \otimes_{\mathcal{O}_\mathbb{E}} \mathfrak{T}(\mathcal{A}_\mathbb{E} \otimes_{\mathcal{A}^{+}_\mathbb{E}} N)$ of $G_{\mathbb{Q}_p}$ is crystalline with Hodge-Tate weights in the interval $[-h; 0]$; and $D_{\text{cris}}(\mathfrak{V}(N))\cong \mathbb{E}\otimes_{\mathcal{O}_\mathbb{E}} N/xN$ as $\varphi$-modules. Moreover, all crystalline representations with Hodge-Tate weights in $[-h;0]$ arise in this way. 
\end{proposition}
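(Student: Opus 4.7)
The strategy is to reduce both directions of the equivalence to computations inside Berger's rigid-analytic period ring $\mathbf{B}^{+}_{\mathrm{rig},\mathbb{E}}$, into which $\mathcal{A}^{+}_\mathbb{E}$ embeds via $x \mapsto \pi = [\varepsilon]-1$. The comparison isomorphism to exploit is
\[
D_{\mathrm{cris}}(\mathfrak{V}(N)) \;=\; \bigl(\mathbf{B}^{+}_{\mathrm{rig},\mathbb{E}}[1/t]\otimes_{\mathcal{A}^{+}_\mathbb{E}} N\bigr)^{\Gamma},
\]
where $t = \log(1+x)$ satisfies $\varphi(t) = pt$ and differs from $x$ by a unit in $\mathbf{B}^{+}_{\mathrm{rig},\mathbb{E}}$.

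For the first direction, given a Wach module $N$ of height $h$, I would first form the scalar extension $\mathbf{B}^{+}_{\mathrm{rig},\mathbb{E}}\otimes_{\mathcal{A}^{+}_\mathbb{E}} N$. Condition (2), triviality of the $\Gamma$-action on $N/xN$, combined with the vanishing of the $\Gamma$-invariants of $x^{i}\mathbf{B}^{+}_{\mathrm{rig},\mathbb{E}}/x^{i+1}\mathbf{B}^{+}_{\mathrm{rig},\mathbb{E}}$ for $i\neq 0$, then permits the identification of $(\mathbf{B}^{+}_{\mathrm{rig},\mathbb{E}}[1/t]\otimes N)^{\Gamma}$ with $\mathbb{E}\otimes_{\mathcal{O}_\mathbb{E}} N/xN$ as $\varphi$-modules, establishing both crystallinity of $\mathfrak{V}(N)$ and the claimed formula for $D_{\mathrm{cris}}$. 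Condition (3), that $Q^{h}$ kills $N/\varphi^{*}(N)$, then bounds the Hodge-Tate weights: applying $\varphi^{-1}$, under which $Q$ becomes a generator of $\ker\theta$ in $\mathbf{B}^{+}_{\mathrm{dR}}$, controls the Hodge filtration on $D_{\mathrm{cris}}(\mathfrak{V}(N))$ and forces the weights to lie in $[-h,0]$.

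For the converse, given a crystalline $V$ with Hodge-Tate weights in $[-h,0]$, the Wach module $N(V)\subset \mathfrak{D}(T)$ (for a chosen lattice $T$) is constructed starting from the overconvergent $(\varphi,\Gamma)$-module $D^{\dagger}(V)$ of Cherbonnier-Colmez. One singles out the canonical $\mathcal{A}^{+}_\mathbb{E}$-sublattice on which $\Gamma$ acts trivially modulo $x$ and whose cokernel under $\varphi^{*}$ is annihilated by $Q^{h}$; its freeness, rank, and uniqueness are the content of Berger's construction in \cite{Ber04} as reformulated in the Wach setting of \cite{Ber12}.

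The main obstacle is this converse direction. Extracting a free, finitely generated $\mathcal{A}^{+}_\mathbb{E}$-submodule of the full $(\varphi,\Gamma)$-module $D(V)$ satisfying simultaneously the triviality condition on $N/xN$ and the height bound on $N/\varphi^{*}(N)$ requires not just overconvergence but a delicate intersection argument inside the Robba ring, together with a careful analysis of the Hodge filtration of $D_{\mathrm{cris}}(V)$ relative to the filtration induced on $N$ by powers of $Q$. Rather than reproducing this construction, I would quote it as a black box from \cite{Ber04} and \cite{Ber12}, which is consistent with the paper's stated aim of using Wach modules as a tool rather than reconstructing their theory.
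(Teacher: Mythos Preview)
The paper does not actually prove this proposition: it is stated with the parenthetical ``(see Prop.~1.1 in \cite{Ber12})'' and no argument is given. Your proposal is therefore strictly more detailed than what the paper does, and since you also ultimately defer the substantive converse construction to \cite{Ber04} and \cite{Ber12}, the two are aligned in spirit---both treat this result as an input from Berger's work rather than something to be reproved.

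Your sketch of the forward direction is reasonable and captures the right mechanism (trivial $\Gamma$-action mod $x$ forces the $\Gamma$-invariants of the extended module to sit in degree zero, and the $Q^{h}$-bound controls the filtration). One small point: the identification of $D_{\mathrm{cris}}$ with $(\mathbf{B}^{+}_{\mathrm{rig},\mathbb{E}}[1/t]\otimes N)^{\Gamma}$ and the passage from there to $\mathbb{E}\otimes N/xN$ require a bit more care than a one-line vanishing statement---one needs that the natural map $N/xN \to (\mathbf{B}^{+}_{\mathrm{rig},\mathbb{E}}[1/t]\otimes N)^{\Gamma}$ is an isomorphism after inverting $p$, which involves a successive-approximation argument to produce $\Gamma$-fixed lifts. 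But since the paper itself simply cites the result, this level of detail is not expected here, and your decision to quote the construction as a black box is exactly what the paper does.
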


\section{Continuity of the Wach modules}

Berger proved (see sec. III.4 in \cite{Ber04}) that the there exists an equivalence of categories between Wach modules (over $\mathcal{A}^{+}_\mathbb{E}$) and $G_{\mathbb{Q}_p}$-stable lattices of crystalline representations. Denote by $\mathfrak{N}$ the functor that associates to each $G_{\mathbb{Q}_p}$-stable lattice of a crystalline representation its Wach module. In this section we will study a bit more in the specific the arithmetic relation between a Wach module and its corresponding $\mathcal{O}_\mathbb{E}$-linear representation which is a $G_{\mathbb{Q}_p}$-stable lattice of a crystalline representation. In particular, we will prove that, in some natural sense, $p$-adically close Wach modules will correspond to $p$-adically close $\mathcal{O}_\mathbb{E}$-linear representations and viceversa.
We will start by clarifying what we mean by $p$-adically close Wach modules. Given two Wach modules $N_1$ and $N_2$ we say that $N_1$ and $N_2$ are congruent modulo some prime power, i.e. 
$N_1\equiv N_2 \mod \pi^m$ for some $m\in\mathbb{Z}_{\geq 1}$, if there exists a $\mathcal{A}^{+}_\mathbb{E}$-module isomorphism between $N_1 \otimes_{\mathcal{A}^{+}_\mathbb{E}} \mathcal{A}^{+}_\mathbb{E} / (\pi_{\mathbb{E}}^m)$ and $N_2 \otimes_{\mathcal{A}^{+}_\mathbb{E}} \mathcal{A}^{+}_\mathbb{E} / (\pi_{\mathbb{E}}^m)$ which is $(\varphi, \Gamma)$-equivariant. \\
Note that, essentially by definition, we have that $N_1 \equiv N_2 \mod \pi_{\mathbb{E}}^m$ if and only if there exist basis of $N_1$ and $N_2$ as $\mathcal{A}^{+}_\mathbb{E}$-modules such that, after defining $P_1=\text{Mat}(\varphi |_{N_1})$, $P_2=\text{Mat}(\varphi |_{N_2})$, $G_1=\text{Mat}(\gamma |_{N_1})$, $G_2=\text{Mat}(\gamma |_{N_2})$  (note that $P_1, P_2, G_1, G_2 \in \text{M}_{d\times d} (\mathcal{A}^{+}_\mathbb{E})$ where $d=\text{rank}_{\mathcal{A}^+_\mathbb{E}}(N_i)$ for $i=1, 2$) it follows that:

$$
\begin{cases}P_1\equiv P_2 \mod \pi_{\mathbb{E}}^m,\\ G_1 \equiv G_2 \mod \pi_{\mathbb{E}}^m .\end{cases}
$$
We have the following continuity result (this is Thm. IV.1.1 in \cite{Ber04}):

\begin{proposition} \label{necessarylocalconstancy}
Let $T_1$ and $T_2$ two Galois stable lattices inside two crystalline $\mathbb{E}$-linear representation $V_1$ and $V_2$ of Hodge-Tate weights inside $[-r; 0]$, and assume there is an $n\in\frac{1}{e} (\mathbb{Z}_{\geq 1})$ and $n\geq \alpha(r)$ such that $T_1 \otimes_{\mathcal{O}_\mathbb{E}} \mathcal{O}_\mathbb{E}/ (p^n)\cong T_2  \otimes_{\mathcal{O}_\mathbb{E}} \mathcal{O}_\mathbb{E} / (p^n)$ as $G_{\mathbb{Q}_p}$-modules, then $\mathfrak{N}(T_1)\equiv \mathfrak{N}(T_2) \mod p^{n-\alpha(r)}$.
\end{proposition}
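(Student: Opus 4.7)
My plan is to pull the mod $p^n$ isomorphism across Fontaine's equivalence, and then show that the extra rigidity coming from the crystalline condition (i.e. the existence and essential uniqueness of a Wach module of height $r$ inside the ambient $(\varphi,\Gamma)$-module) allows one to descend the isomorphism from the $(\varphi,\Gamma)$-module to the Wach module with a controlled loss of $\alpha(r)$ digits of $p$-adic precision.

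First I would apply Theorem \ref{fontaine} together with Remark \ref{exactness}: the hypothesis $T_1/p^n T_1 \cong T_2/p^n T_2$ gives, functorially, a $(\varphi,\Gamma)$-equivariant isomorphism $\mathfrak{D}(T_1)/p^n \cong \mathfrak{D}(T_2)/p^n$ of modules over $\mathcal{A}_\mathbb{E}/(p^n)$. Writing $N_i := \mathfrak{N}(T_i)$ and $D_i := \mathfrak{D}(T_i) = \mathcal{A}_\mathbb{E}\otimes_{\mathcal{A}^+_\mathbb{E}} N_i$, the task is to lift this to an isomorphism $N_1/p^{n-\alpha(r)} \cong N_2/p^{n-\alpha(r)}$ over $\mathcal{A}^+_\mathbb{E}/(p^{n-\alpha(r)})$.

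Next I would use Berger's characterization of the Wach module as the unique $\mathcal{A}^+_\mathbb{E}$-sublattice of $D_i$ satisfying the three conditions of the definition (in particular $N_i/\varphi^*(N_i)$ is killed by $Q^r$ and $\Gamma$ acts trivially on $N_i/xN_i$). Fixing $\mathcal{A}^+_\mathbb{E}$-bases of $N_1$ and $N_2$, the matrices $P_i,G_i$ of $\varphi,\gamma$ on $N_i$ become the matrices of the same operators on $D_i$ in those bases. Because each $N_i$ is recovered from $D_i$ by inverting the relations $\varphi^*(N_i)\supseteq Q^r N_i$, the natural ``recovery'' formulas force all denominators to be powers of $\varphi^n(Q)$ ($n\ge 1$). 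Since $Q\equiv p\pmod{x}$ gives $\varphi^n(Q)\equiv p\pmod{\varphi^n(x)}$, the $p$-adic valuation of the combined denominator arising in $r$ iterations is bounded above by $\sum_{n\ge 1}\lfloor r/(p^{n-1}(p-1))\rfloor = \alpha(r)$. Transporting the chosen basis of $N_1$ through the isomorphism $D_1/p^n\cong D_2/p^n$ and re-expressing it in a basis of $N_2$ then produces matrices that differ from $P_2,G_2$ by quantities whose denominators are killed once one reduces modulo $p^{n-\alpha(r)}$.

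The main obstacle, and the reason the bound $n\ge\alpha(r)$ appears in the hypothesis, is this quantitative control of the ``denominators'' that relate $N_i$ to $D_i$: one has to check that modulo $p^{n-\alpha(r)}$ the relations defining the Wach module (freeness over $\mathcal{A}^+_\mathbb{E}$, trivial $\Gamma$-action on $N/xN$, and $Q^r$-killing of $N/\varphi^*N$) are rigid enough to determine the pair $(P_i, G_i)$ from the mod $p^n$ class of the pair of matrices on $D_i$. Once this rigidity is established — essentially by showing that any $\mathcal{A}^+_\mathbb{E}[1/Q\varphi(Q)\cdots]$-automorphism of the ambient $(\varphi,\Gamma)$-module that is the identity mod $p^n$ stabilizes the Wach module up to the expected loss — one concludes $P_1\equiv P_2$ and $G_1\equiv G_2\pmod{p^{n-\alpha(r)}}$ in some common basis, which by the criterion recalled just before the proposition is exactly the statement $\mathfrak{N}(T_1)\equiv\mathfrak{N}(T_2)\pmod{p^{n-\alpha(r)}}$.
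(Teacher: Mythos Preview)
The paper does not give its own proof of this proposition: it is simply quoted as Thm.~IV.1.1 of \cite{Ber04}. Your overall strategy --- transport the mod $p^n$ isomorphism through Fontaine's equivalence to $\mathfrak{D}(T_i)$, then descend to the Wach modules $\mathfrak{N}(T_i)$ with a controlled precision loss --- is indeed Berger's, so in broad outline you are on the right track.

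However, your identification of the source of the constant $\alpha(r)$ is wrong, and this is the heart of the argument. You attribute the $p^{\alpha(r)}$ loss to ``denominators that are powers of $\varphi^n(Q)$'' coming from the relation $Q^r N_i\subseteq\varphi^*(N_i)$, and then assert without justification that ``the $p$-adic valuation of the combined denominator arising in $r$ iterations is bounded above by $\sum_{n\ge 1}\lfloor r/(p^{n-1}(p-1))\rfloor=\alpha(r)$''. But the very definition $\alpha(r)=\sum_{j=1}^{r} v(1-\chi(\gamma)^{j})$ recalled in \S4.1 shows that the loss is a feature of the $\Gamma$-action, not of $Q$ or $\varphi$. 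In Berger's proof one first passes to the submodule $D^+(T_i)\subset\mathfrak{D}(T_i)$ (which costs nothing modulo $p^n$), and then uses that the Wach module is singled out inside $D^+(T_i)$ by the requirement that $\Gamma$ act trivially on $N/xN$: making this effective forces one to divide by the scalars $1-\chi(\gamma)^{j}$ for $1\le j\le r$, and it is precisely the sum of their valuations that gives $\alpha(r)$. Your $Q$-bookkeeping, if carried through honestly, would at best produce a loss of order $r$, not $\alpha(r)$; the leap to the formula for $\alpha(r)$ is the missing idea.
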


Now, if $N$ is a Wach module denote by $\mathfrak{T}(N):=\mathfrak{V}(D(N))$ the $\mathcal{O}_\mathbb{E}$-linear representation of $G_{\mathbb{Q}_p}$ attached to $N$. We recall that $\mathfrak{T}(N)\otimes \mathbb{E}$ is a crystalline representation. \\
We are interested in the following result:

\begin{proposition} Let $N_1$ and $N_2$ be two Wach modules (over $\mathcal{O}_\mathbb{E}$) with the same rank as $\mathcal{A}^{+}_\mathbb{E}$-modules. Assume that $N_1\equiv N_2 \text{ mod } \pi_{\mathbb{E}}^n$ for some $n\in\mathbb{Z}_{\geq 1}$. Then $\mathfrak{T}(N_1)\otimes_{\mathcal{O}_\mathbb{E}} \mathcal{O}_\mathbb{E} /(\pi_{\mathbb{E}}^n)\cong\mathfrak{T}(N_2)\otimes_{\mathcal{O}_\mathbb{E}} \mathcal{O}_\mathbb{E} /(\pi_{\mathbb{E}}^n)$ as $G_{\mathbb{Q}_p}$-modules.

\end{proposition}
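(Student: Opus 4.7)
The plan is to transport the given congruence $N_1\equiv N_2 \bmod \pi_\mathbb{E}^n$ first to the level of étale $(\varphi,\Gamma)$-modules via the base change $D(N)=\mathcal{A}_\mathbb{E}\otimes_{\mathcal{A}^+_\mathbb{E}} N$, and then across Fontaine's equivalence to the level of $\mathcal{O}_\mathbb{E}$-representations using Remark \ref{exactness}.

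More concretely, the hypothesis supplies a $(\varphi,\Gamma)$-equivariant $\mathcal{A}^+_\mathbb{E}$-linear isomorphism
\[
N_1/\pi_\mathbb{E}^n N_1 \;\xrightarrow{\;\sim\;}\; N_2/\pi_\mathbb{E}^n N_2.
\]
First I would base change this isomorphism along $\mathcal{A}^+_\mathbb{E}\hookrightarrow \mathcal{A}_\mathbb{E}$. Since $\mathcal{A}_\mathbb{E}$ is flat over $\mathcal{A}^+_\mathbb{E}$ (noted explicitly in the text right before the definition of Wach module), the canonical map
\[
\mathcal{A}_\mathbb{E}\otimes_{\mathcal{A}^+_\mathbb{E}}\bigl(N_i/\pi_\mathbb{E}^n N_i\bigr) \;\xrightarrow{\;\sim\;}\; D(N_i)/\pi_\mathbb{E}^n D(N_i)
\]
is an isomorphism of étale $(\varphi,\Gamma)$-modules over $\mathcal{A}_\mathbb{E}/(\pi_\mathbb{E}^n)$, and therefore we obtain an isomorphism
\[
D(N_1)/\pi_\mathbb{E}^n D(N_1)\;\cong\; D(N_2)/\pi_\mathbb{E}^n D(N_2)
\]
in $\textbf{Mod}^{\text{\'et}}_{(\varphi,\Gamma)}(\mathcal{O}_\mathbb{E})$.

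Next I would apply Fontaine's quasi-inverse functor $\mathfrak{T}$ from Theorem \ref{fontaine}. By Remark \ref{exactness}, the equivalence commutes with reduction modulo fixed powers of the uniformizer, i.e.\
\[
\mathfrak{T}\bigl(D(N_i)\bigr)\big/\pi_\mathbb{E}^n\mathfrak{T}\bigl(D(N_i)\bigr) \;\cong\; \mathfrak{T}\bigl(D(N_i)/\pi_\mathbb{E}^n D(N_i)\bigr).
\]
Composing these identifications with the isomorphism obtained in the previous step, and recalling the definition $\mathfrak{T}(N):=\mathfrak{V}(D(N))$ (which on integral lattices agrees with $\mathfrak{T}\circ D$ up to the inversion of $p$), I would conclude
\[
\mathfrak{T}(N_1)\otimes_{\mathcal{O}_\mathbb{E}}\mathcal{O}_\mathbb{E}/(\pi_\mathbb{E}^n) \;\cong\; \mathfrak{T}(N_2)\otimes_{\mathcal{O}_\mathbb{E}}\mathcal{O}_\mathbb{E}/(\pi_\mathbb{E}^n)
\]
as $G_{\mathbb{Q}_p}$-modules.

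The argument is essentially a diagram-chase; the only point requiring some care is the verification that flatness of $\mathcal{A}_\mathbb{E}$ over $\mathcal{A}^+_\mathbb{E}$ together with the exactness of $\mathfrak{D}$ and $\mathfrak{T}$ are enough to identify $\mathfrak{T}(N_i)\bmod\pi_\mathbb{E}^n$ with the image under $\mathfrak{T}$ of $D(N_i)\bmod\pi_\mathbb{E}^n$ (so that the two $\pi_\mathbb{E}^n$-reductions on either side of Fontaine's equivalence correspond). This is exactly the compatibility recorded in Remark \ref{exactness}, so the main obstacle reduces to a bookkeeping check that the integral structure is preserved; no genuinely new input is needed beyond the equivalences already recalled.
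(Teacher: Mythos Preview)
Your proposal is correct and follows essentially the same approach as the paper: use flatness of $\mathcal{A}_\mathbb{E}$ over $\mathcal{A}^+_\mathbb{E}$ to transport the congruence $N_1\equiv N_2\bmod \pi_\mathbb{E}^n$ to an isomorphism $D(N_1)/\pi_\mathbb{E}^n\cong D(N_2)/\pi_\mathbb{E}^n$ of torsion \'etale $(\varphi,\Gamma)$-modules, then apply the exact functor $\mathfrak{T}$ via Remark~\ref{exactness}. Your parenthetical about the overloaded notation $\mathfrak{T}(N)$ is apt but does not affect the argument.
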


\begin{proof}
Consider the \'etale $(\varphi, \Gamma)$-modules $D(N_i)=N_i \otimes_{\mathcal{A}^+_\mathbb{E}} \mathcal{A}_\mathbb{E}$ for $i=1, 2$. 
Since $\mathcal{A}_\mathbb{E}$ is flat as $\mathcal{A}^{+}_\mathbb{E}$-module (module structure given by inclusion) we have the following chain of isomorphism of torsion \'etale $(\varphi, \Gamma)$-modules:

$$D(N_1)/\pi_{\mathbb{E}}^n D(N_1)\cong N_1/\pi_{\mathbb{E}}^n N_1 \otimes_{\mathcal{A}^{+}_\mathbb{E}} \mathcal{A}_\mathbb{E} \cong N_2/\pi_{\mathbb{E}}^n N_2 \otimes_{\mathcal{A}^{+}_\mathbb{E}} \mathcal{A}_\mathbb{E}\cong D(N_2)/\pi_{\mathbb{E}}^n D(N_2).$$
Now, the claim follows just by applying Fontaine's functor $\mathfrak{T}$, which is exact (see Theorem \ref{fontaine} and see Remark \ref{exactness}).
\end{proof}
In the following sections, the idea will be to apply the above continuity results of the Wach modules to study the local constancy phenomena for reductions modulo prime powers of $G_{\mathbb{Q}_p}$-stable lattices inside $\mathbb{E}$-linear crystalline representations of dimension two. 

\section{Local constancy with respect to the trace}

In this section we are going to prove an explicit local constancy result with respect to the trace (i.e. the weight $k$ will be fixed) for reductions modulo prime powers of representations of the type $V_{k, a_p}$.

\subsection{Some linear algebra of Wach modules}

As explained in the previous section, a congruence between Wach modules (modulo some prime power) can be translated into a congruence (modulo the same prime power) between systems of matrices representing the $(\varphi, \Gamma)$ actions on the Wach modules involved. 
In this section, we will see how to $p$-adically deform a Wach module into another one via linear algebra means for the systems of matrices associated to the $(\varphi, \Gamma)$-module structure.\\
As long as it will be possible, we will keep the same notation as Berger (see \cite{Ber12}).
We recall that $p$ is an odd prime and $\mathbb{E}$ is a finite extension of $\mathbb{Q}_p$ with ramification index $e$. Let $v$ be the normalized $p$-adic valuation (i.e. $ v(p)=1$). Let $r\geq 1$ be a integer and define $\alpha(r):= \sum_{j= 1}^{r} v(1-\chi(\gamma)^{j} )$ (see \cite{Ber12}). The constant $\alpha(r)$ has also an explicit description given by $\alpha(r)=\sum_{n\geq 1} \lfloor \frac{r}{ p^{n-1} (p-1)}\rfloor$. \\
We start by recalling two useful results in linear algebra (see Lemma 2.1 in \cite{Ber12}):

\begin{lemma}
If $P_0\in M_2 (\mathcal{O}_\mathbb{E})$ is a matrix with eigenvalues $\lambda \not=\mu$, and if $\delta=\lambda -\mu$, then there exists $Y\in M_2 (\mathcal{O}_\mathbb{E})$ such that $Y^{-1} \in \delta^{-1} M_2 (\mathcal{O}_\mathbb{E})$ and $Y^{-1} P_0 Y=\begin{pmatrix} \lambda & 0 \\ 0 &\mu\end{pmatrix}$. 
\end{lemma}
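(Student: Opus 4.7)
The approach is a pure linear-algebra argument over the DVR $\mathcal{O}_\mathbb{E}$. First I would observe that since $P_0 \in M_2(\mathcal{O}_\mathbb{E})$, its characteristic polynomial $X^2 - \mathrm{tr}(P_0) X + \det(P_0)$ lies in $\mathcal{O}_\mathbb{E}[X]$, so its roots $\lambda$ and $\mu$ are integral over $\mathcal{O}_\mathbb{E}$; as $\mathcal{O}_\mathbb{E}$ is integrally closed in $\mathbb{E}$, this forces $\lambda, \mu \in \mathcal{O}_\mathbb{E}$ (working if necessary inside a finite extension of $\mathbb{E}$ containing them), and in particular $\delta \in \mathcal{O}_\mathbb{E}$.

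The key step is to triangularize $P_0$ over $\mathcal{O}_\mathbb{E}$. The submodule $\ker(P_0 - \lambda I) \subseteq \mathcal{O}_\mathbb{E}^2$ is nonzero and saturated, since the quotient $\mathcal{O}_\mathbb{E}^2 / \ker(P_0 - \lambda I)$ injects into $\mathcal{O}_\mathbb{E}^2$ and is therefore torsion-free. Hence it is a rank-one direct summand, generated by a unimodular vector $v_\lambda$. Over a DVR every unimodular vector extends to a basis, so I pick $w \in \mathcal{O}_\mathbb{E}^2$ with $(v_\lambda, w)$ an $\mathcal{O}_\mathbb{E}$-basis; in this basis, $P_0$ takes the upper-triangular form $\begin{pmatrix} \lambda & \beta \\ 0 & \mu \end{pmatrix}$ for some $\beta \in \mathcal{O}_\mathbb{E}$ (the bottom-right entry equals $\mu$ by matching traces).

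Finally I would write down the tautological eigenvector for $\mu$ in the new basis: set $v_\mu := -\beta\, v_\lambda + \delta\, w$, which is an eigenvector for $\mu$ by a direct check using $\delta = \lambda - \mu$. Setting $Y := [\,v_\lambda \mid v_\mu\,]$, its matrix in the basis $(v_\lambda, w)$ is $\begin{pmatrix} 1 & -\beta \\ 0 & \delta \end{pmatrix}$, which has entries in $\mathcal{O}_\mathbb{E}$, determinant $\delta$, and inverse $\delta^{-1}\begin{pmatrix} \delta & \beta \\ 0 & 1 \end{pmatrix} \in \delta^{-1} M_2(\mathcal{O}_\mathbb{E})$. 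Returning to the standard basis only multiplies $Y$ and $Y^{-1}$ on the left by a unimodular matrix in $\mathrm{GL}_2(\mathcal{O}_\mathbb{E})$, which preserves both the integrality of $Y$ and the containment $Y^{-1} \in \delta^{-1} M_2(\mathcal{O}_\mathbb{E})$.

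I do not expect any genuine obstacle: the only subtle point is the saturation of $\ker(P_0 - \lambda I)$ in $\mathcal{O}_\mathbb{E}^2$, which guarantees the existence of a primitive eigenvector admitting a completion to a basis; once that is set up, the factor of $\delta$ falls out automatically from the tautological eigenvector $(-\beta, \delta)$ for $\mu$ in the triangularized basis.
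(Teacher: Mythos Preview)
The paper does not actually give its own proof of this lemma: it is quoted verbatim as Lemma~2.1 of \cite{Ber12} and left without argument. Your proposal is therefore being compared against a citation, not a proof, and on its own terms it is correct and essentially the standard argument. The saturation step is exactly right (the image of $P_0-\lambda I$ in $\mathcal{O}_\mathbb{E}^2$ is torsion-free, so the kernel is a direct summand), the triangularization follows, and writing down the second eigenvector as $(-\beta,\delta)$ in the triangular basis makes $\det Y=\delta$ and hence $Y^{-1}\in\delta^{-1}M_2(\mathcal{O}_\mathbb{E})$ transparent. One cosmetic slip: when you pass back to the standard basis you have $Y'=MY$ with $M\in\mathrm{GL}_2(\mathcal{O}_\mathbb{E})$, so $Y^{-1}$ gets multiplied on the \emph{right} by $M^{-1}$, not on the left; this does not affect the conclusion.
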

and the following corollary (see cor. 2.2 in \cite{Ber12}):

\begin{corollary}
If $\alpha\in \frac{1}{e} \mathbb{Z}_{\geq 0}$ and $\epsilon\in\mathcal{O}_\mathbb{E}$ are such that $v(\epsilon)\geq 2 v(\delta)+\alpha$, then there exists $H_0 \in p^{\alpha} M_2 (\mathcal{O}_\mathbb{E})$ such that $\text{det} (\text{Id}+H_0)=1$ and $\text{Tr}(H_0 P_0)=\epsilon$. 
\end{corollary}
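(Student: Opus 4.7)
The plan is to construct $H_0$ explicitly as a scalar multiple of a conjugate of a fixed nilpotent matrix: nilpotency will automatically give $\text{det}(\text{Id}+H_0)=1$, while the scalar is chosen to enforce the trace condition.

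First I would apply the preceding lemma to obtain $Y\in M_2(\mathcal{O}_\mathbb{E})$ with $Y^{-1}\in \delta^{-1} M_2(\mathcal{O}_\mathbb{E})$ such that $D:=Y^{-1}P_0 Y=\begin{pmatrix}\lambda&0\\0&\mu\end{pmatrix}$. Next I would introduce the nilpotent matrix
$$N:=\begin{pmatrix}1&1\\-1&-1\end{pmatrix},$$
which satisfies $N^2=0$ and $\text{Tr}(ND)=\lambda-\mu=\delta$. I would then define
$$H_0:=\frac{\epsilon}{\delta}\cdot Y N Y^{-1}.$$
Note that the scalar $\epsilon/\delta$ lies in $\mathcal{O}_\mathbb{E}$, since by hypothesis $v(\epsilon)-v(\delta)\geq v(\delta)+\alpha\geq 0$.

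Three verifications then complete the proof. For the determinant condition, $YNY^{-1}$ is nilpotent as a conjugate of the nilpotent $N$, hence so is $H_0$, so $\text{Id}+H_0$ is unipotent with determinant $1$. For the trace, cyclicity of the trace together with $P_0=YDY^{-1}$ gives
$$\text{Tr}(H_0 P_0)=\frac{\epsilon}{\delta}\,\text{Tr}(YNY^{-1}\cdot YDY^{-1})=\frac{\epsilon}{\delta}\,\text{Tr}(ND)=\epsilon.$$
Finally, for the integrality condition $H_0\in p^\alpha M_2(\mathcal{O}_\mathbb{E})$: since $Y,N\in M_2(\mathcal{O}_\mathbb{E})$ and $Y^{-1}\in\delta^{-1}M_2(\mathcal{O}_\mathbb{E})$, the product $YNY^{-1}$ has entries of valuation at least $-v(\delta)$, and multiplying by $\epsilon/\delta$ yields entries of valuation at least $v(\epsilon)-2v(\delta)\geq\alpha$.

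The main (and essentially only) subtle point is this last valuation estimate, where the hypothesis $v(\epsilon)\geq 2v(\delta)+\alpha$ is used in the sharpest possible way: the two factors of $v(\delta)$ compensate exactly for the one lost through conjugation by $Y^{-1}$ and the one lost through the scalar $1/\delta$. That this bookkeeping works on the nose, with the given hypothesis, is precisely why the corollary is stated in this form.
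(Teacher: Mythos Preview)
Your proof is correct. The paper itself does not give a proof of this corollary; it simply cites Berger (\cite{Ber12}, Cor.~2.2), so there is nothing to compare against beyond noting that your construction---conjugating a nilpotent matrix with $\text{Tr}(ND)=\delta$ back through $Y$ and scaling by $\epsilon/\delta$---is exactly the kind of argument one expects here, and all three verifications (unipotence, trace, valuation) go through as you wrote them.
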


The corollary above represents the starting point to deform a Wach module into another one. Given the matrix $P_0$, it gives a $p$-adically small matrix $H_0$ such that the product $H_0 P_0$ will have a prescribed $p$-adically small trace.
In practice, this will be applied when $P_0$ is obtained by the action of $\varphi$ on $D_{\text{cris}} (V_{k, a_p})$ for some $k\geq 2$ and $a_p \in\mathfrak{m}_{\mathbb{E}}$.\\
The idea behind the next results is to show how $H_0$ gives rise to a deformation of a whole system of matrices (attached to a Wach module) to a $p$-adically close one preserving the characterizing linear algebra properties of the action on $\varphi$ and $\Gamma$ on the Wach module. \\
We have the following result (it is a little generalization of Prop. 2.3 in \cite{Ber12}):

\begin{proposition}
Let $m\in \frac{1}{e} \mathbb{Z}_{\geq 0}$ and $d\in\mathbb{Z}_{\geq 2}$. If $G\in \text{Id}+x M_d (\mathcal{O}_\mathbb{E} [[x]])$ and $k\geq 2$ and $H_0 \in p^{\alpha (k-1)+m} M_d (\mathcal{O}_\mathbb{E})$, then there exists $H\in p^{m} M_d (\mathcal{O}_\mathbb{E}[[x]])$ such that $H(0)=H_0$ and $HG\equiv G\gamma(H) \text{ mod }x^k$.
\end{proposition}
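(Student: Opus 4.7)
The plan is to solve for $H$ coefficient-by-coefficient as a polynomial in $x$ of degree $<k$, and then extend by zero. Write
$$H=\sum_{j=0}^{k-1}H_j x^j,\qquad G=\mathrm{Id}+\sum_{l\geq 1}G_l x^l,$$
with unknowns $H_j\in M_d(\mathcal{O}_\mathbb{E})$, $H_0$ being the given initial condition. Recall that $\gamma$ acts trivially on $\mathcal{O}_\mathbb{E}$ and that $\gamma(x)=(1+x)^{\chi(\gamma)}-1$, so for each $i\geq 1$ we have an expansion $\gamma(x)^i=\sum_{l\geq i}c_{i,l}x^l$ with $c_{i,i}=\chi(\gamma)^i$ and $c_{i,l}\in\mathbb{Z}_p$ for $l>i$. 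Expanding $HG-G\gamma(H)\equiv 0\pmod{x^k}$ and collecting the coefficient of $x^j$ for $1\leq j\leq k-1$ yields a recursion of the shape
$$(1-\chi(\gamma)^j)\,H_j=R_j(H_0,H_1,\dots,H_{j-1}),$$
where $R_j$ is an explicit $\mathcal{O}_\mathbb{E}$-linear combination, with coefficients in $M_d(\mathcal{O}_\mathbb{E})$ coming from the $G_l$'s and the universal $c_{i,l}\in\mathbb{Z}_p$, of the previously constructed $H_i$'s. Since $\chi(\gamma)^j\neq 1$ (as $\gamma$ topologically generates $\Gamma\cong\mathbb{Z}_p^\times$ and $p$ is odd), each such equation admits a unique solution $H_j$ in $\mathbb{E}\otimes_{\mathcal{O}_\mathbb{E}}M_d(\mathcal{O}_\mathbb{E})$.

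Next I would control the $p$-adic denominators by induction on $j$. The precise inductive claim is
$$H_j\in p^{\alpha(k-1)+m-\alpha(j)}\,M_d(\mathcal{O}_\mathbb{E}),\qquad 0\leq j\leq k-1,$$
where we set $\alpha(0)=0$. The base case $j=0$ is the hypothesis $H_0\in p^{\alpha(k-1)+m}M_d(\mathcal{O}_\mathbb{E})$. For the inductive step, the coefficients appearing in $R_j$ lie in $\mathcal{O}_\mathbb{E}$, so $R_j$ inherits the minimal valuation of $H_0,\dots,H_{j-1}$, which by induction is at least $\alpha(k-1)+m-\alpha(j-1)$. Dividing by $1-\chi(\gamma)^j$ costs exactly $v(1-\chi(\gamma)^j)$, yielding
$$v(H_j)\geq \alpha(k-1)+m-\alpha(j-1)-v(1-\chi(\gamma)^j)=\alpha(k-1)+m-\alpha(j),$$
as desired. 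Since $\alpha$ is non-decreasing and $j\leq k-1$, we have $\alpha(j)\leq\alpha(k-1)$, hence $v(H_j)\geq m$. Therefore $H\in p^m M_d(\mathcal{O}_\mathbb{E}[[x]])$.

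Finally, extending by $H_j=0$ for $j\geq k$ produces $H\in p^m M_d(\mathcal{O}_\mathbb{E}[[x]])$ with $H(0)=H_0$, and by construction $HG\equiv G\gamma(H)\pmod{x^k}$. The main obstacle is purely bookkeeping: verifying that the recursion really takes the clean form $(1-\chi(\gamma)^j)H_j=R_j$ with $R_j$ an $\mathcal{O}_\mathbb{E}$-coefficient combination of previous $H_i$'s, so that the valuation loss per step is exactly $v(1-\chi(\gamma)^j)$ and nothing worse, which is what makes $\alpha(k-1)$ the correct total denominator to absorb.
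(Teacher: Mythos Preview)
Your proof is correct and follows essentially the same approach as the paper: both expand $H$ as a polynomial $\sum_{j=0}^{k-1}H_j x^j$, derive the recursion $(1-\chi(\gamma)^j)H_j=R_j(H_0,\dots,H_{j-1})$ by comparing coefficients in $HG\equiv G\gamma(H)\pmod{x^k}$, and then prove by induction the integrality bound $H_j\in p^{\alpha(k-1)+m-\alpha(j)}M_d(\mathcal{O}_\mathbb{E})$ using the telescoping identity $\alpha(j)=\alpha(j-1)+v(1-\chi(\gamma)^j)$. Your presentation is in fact slightly cleaner than the paper's, which carries out the same induction but with a less explicit description of the integral coefficients $c_{i,l}$ appearing in $\gamma(x)^i$.
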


\begin{proof} 
As $G\in \text{Id}+x M_d (\mathcal{O}_\mathbb{E} [[x]])$, we can write $G=\text{Id}+x G_1+ x^2 G_2 + \dots$ where $G_i \in \text{M}_d (\mathcal{O}_\mathbb{E})$ for all $i\in\mathbb{Z}_{\geq 1}$. We prove that for any positive integer $r$, there exists an $H_r \in p^{\alpha(k-1)-\alpha(r)+m} M_d (\mathcal{O}_\mathbb{E})$ such that if we define $H=H_0+x H_1+x^2 H_2+\dots +x^{k-1} H_{k-1}$ we have that $HG\equiv G \gamma(H) \text{ mod }x^k$.\\
We start from $r=1$, then since $\gamma(H)=H_0+\gamma(x) H_1 + \gamma(x^2) H_2+ \dots \gamma(x^{k-1}) H_{k-1}$ and for all $w\in \mathbb{Z}_{\geq 1}$ we have $\gamma(x^w)=((1+x)^{\chi(\gamma)}-1)^{w}$, we deduce that we can define $H_1$ such that $(1-\chi(\gamma))H_1=G_1 H_0-H_0 G_1$. Since by hypothesis $H_0 \equiv 0 \text{ mod }p^{\alpha(k-1)+m}$ and by definition $\alpha(1)=v_p (1-\chi(\gamma))$, we deduce that $H_1 \in p^{\alpha(k-1)-\alpha(1)+m} M_d (\mathcal{O}_\mathbb{E})$.\\
Using now the same argument, one can actually see how $H_r$ is uniquely determined by $H_0 , H_1 , \dots H_{r-1} $ and moreover $(1-\chi(\gamma)^r) H_r \in p^{\alpha(k-1)-\alpha(r-1)+m} M_d (\mathcal{O}_\mathbb{E})$, note that $\alpha (r)=\alpha(k-1)-\alpha(r-1)$. To be precise, we prove this by induction on $r\leq k-1$. The first case $r=1$ is proven above, now assume the case $r-1$, we are going to prove the statement for $r$.\\
It is straightforward to prove that, expanding the expression $HG\equiv G\gamma(H) \mod x^k$, the following identity holds:
$$
(1-\chi(\gamma)^r) H_r=\sum^{r-1}_{h=0} \Big( \sum^{h}_{n=0} \gamma(x^n)_{h} H_n\Big) G_{r-h}-\sum_{i=0}^{r-1} H_i G_{r-i}, \quad\quad\text{for }r\leq k-1
$$
where $\gamma(x^n)_{h}$ is the $h$-th coefficient (i.e. coefficient of $x^h$) of the polynomial $\gamma(x^n)$. Note now that the map $\alpha(n)$ is non-decreasing as $n$ grows. Hence, by inductive hypothesis, we deduce that for any $i$ such that $0\leq i \leq r-1$ we have $H_i\equiv 0 \mod (p^{\alpha(k-1)-\alpha(r-1)+m})$. Since $v(1-\chi(\gamma)^r)+\alpha(r-1)=\alpha(r)$, we deduce that $H_r\equiv 0 \mod (p^{\alpha(k)-\alpha(r)+m})$. This concludes the proof. 
\end{proof}

The following result completes the linear algebra deformation process on a system of matrices that will represent the $(\varphi, \Gamma)$-action on Wach modules:
\begin{proposition}
Let $m\in\frac{1}{e} \mathbb{Z}_{\geq 0}$ and $d\in\mathbb{Z}_{\geq 2}$. Let $G\in \text{Id}+x M_d (\mathcal{O}_\mathbb{E} [[x]])$ and $P\in M_d (\mathcal{O}_\mathbb{E} [[x]])$ satisfy $P\varphi(G)=G\gamma (P)$ and $\text{det}(P)=Q^{k-1}$ where $Q=\frac{(1+x)^p-1}{x}$.\\
If $H_0 \in p^{\alpha (k-1)+m} M_d (\mathcal{O}_\mathbb{E})$, then there exist $G' \in \text{Id}+x M_d (\mathcal{O}_\mathbb{E} [[x]])$ and $H\in p^m M_d (\mathcal{O}_\mathbb{E} [[x]])$ such that:

$$
\begin{aligned}
&(1)\;\;\;H(0)=H_0;\\ 
&(2)\;\;\;P' \varphi(G')=G' \gamma (P') \text{, where }\;P'=(\text{Id}+H)P;\\
&(3)\;\;\;P\equiv P' \text{ mod }p^m ;\\
&(4)\;\;\;G\equiv G' \text{ mod }p^m .\\
\end{aligned}
$$
\end{proposition}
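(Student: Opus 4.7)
The plan is to produce $H$ from the previous proposition and then build $G'$ as a $p$-adically small perturbation of $G$ forced by the perturbation $P \mapsto P'$.

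First, I would apply the previous proposition to the given $G$, $H_0$, $k$ and $m$ to obtain a matrix $H \in p^m M_d(\mathcal{O}_\mathbb{E}[[x]])$ with $H(0) = H_0$ and $HG \equiv G\gamma(H) \pmod{x^k}$. Setting $P' := (\text{Id}+H)P$, property (1) is part of the output of the previous proposition, and (3) is immediate from $H \in p^m M_d$.

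For (2) and (4), I look for $G'$ in the form $G + G_1$ with $G_1 \in p^m xM_d(\mathcal{O}_\mathbb{E}[[x]])$, which forces the shape condition on $G'$ together with (4). Expanding $P'\varphi(G') = G'\gamma(P')$ using $P\varphi(G) = G\gamma(P)$ and $\gamma(P') = (\text{Id}+\gamma(H))\gamma(P)$, this relation is equivalent to the linear equation
\[
(\text{Id}+H)P\varphi(G_1) - G_1(\text{Id}+\gamma(H))\gamma(P) = \bigl(G\gamma(H) - HG\bigr)\gamma(P),
\]
whose right-hand side lies in $p^m x^k M_d(\mathcal{O}_\mathbb{E}[[x]])$ by the properties of $H$ (using that $p^m M_d \cap x^k M_d$ collapses to $p^m x^k M_d$ because $\mathcal{O}_\mathbb{E}[[x]]$ is $p$- and $x$-torsion free). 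This is the crucial initial precision to start the iteration.

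To solve for $G_1$, I would proceed by $x$-adic successive approximation: writing $G_1 = \sum_{j\geq 1} x^j Y_j$ with $Y_j \in M_d(\mathcal{O}_\mathbb{E})$ and matching coefficients order by order, one obtains at each stage a linear equation for $Y_j$ in terms of $Y_1,\dots,Y_{j-1}$, whose leading operator involves $(\text{Id}+H_0)P_0$ together with its $\Gamma$-twist (and a contribution from $\varphi(x)^j \equiv p^j x^j \pmod{x^{j+1}}$). The $x$-adic completeness of $\mathcal{A}^+_\mathbb{E}$ assembles the series $G_1$, and $G' := G + G_1$ then yields a candidate satisfying (2).

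The main obstacle is tracking the $p$-adic size of the $Y_j$ so that $G_1$ actually lies in $p^m M_d(\mathcal{O}_\mathbb{E}[[x]])$ rather than merely in $p^m \mathbb{E} \otimes M_d(\mathcal{O}_\mathbb{E}[[x]])$. Since $\det(P_0) = Q(0)^{k-1} = p^{k-1}$, inverting the leading operator introduces denominators; a careful analysis using the $\Gamma$-twist, in the same spirit as the recursion of the previous proposition, shows that the accumulated denominators are bounded by $p^{\alpha(k-1)}$. The hypothesis $H_0 \in p^{\alpha(k-1)+m}M_d(\mathcal{O}_\mathbb{E})$ is tuned precisely so that after absorbing these denominators every correction still lies in $p^m M_d(\mathcal{O}_\mathbb{E})$, which yields (4). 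The whole argument is the $d$-dimensional analogue of Berger's rank-two argument (Prop. 2.4 of \cite{Ber12}).
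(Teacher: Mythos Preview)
Your overall scheme is the paper's: build $H$ from the previous proposition, set $P'=(\mathrm{Id}+H)P$, and then produce $G'$ by an $x$-adic iteration (the paper phrases this as $G'_k=G$, $G'_{j+1}=G'_j+x^jS_j$, which is exactly your successive-approximation for $G_1$). Claims (1)--(3) are handled correctly.

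The gap is in your justification of (4). You assert that inverting the leading operator introduces denominators bounded by $p^{\alpha(k-1)}$, to be absorbed by the extra $p^{\alpha(k-1)}$ in $H_0$. This is not what happens, and the analogy with the previous proposition is misleading here. The constant $\alpha(k-1)$ has already been fully spent in passing from $H_0$ to $H\in p^m M_d$; from that point on one only uses $H\equiv 0\pmod{p^m}$. The reason no denominators appear in the $G'$-step is structural: since the right-hand side lies in $x^k M_d$, the recursion starts at level $j\ge k$. At level $j$ the relevant constant-term equation is
\[
S_j-p^{\,j-k+1}\,P'(0)\,S_j\,\bigl(p^{k-1}\gamma(P')^{-1}(0)\bigr)=-R_j(0),
\]
and since $j\ge k$ the factor $p^{j-k+1}$ is integral while $p^{k-1}\gamma(P')^{-1}(0)\in M_d(\mathcal{O}_\mathbb{E})$ (because $\det P'$ equals $Q^{k-1}$ times a unit). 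Hence the operator on the left is $\mathrm{Id}$ minus a $p$-adically small map, so it is a bijection of $M_d(\mathcal{O}_\mathbb{E})$ \emph{and} of $p^m M_d(\mathcal{O}_\mathbb{E})$. Induction then gives $R_j\equiv 0$ and $S_j\equiv 0\pmod{p^m}$ for all $j\ge k$, whence $G'\equiv G\pmod{p^m}$. Your argument would go through once you replace the $\alpha(k-1)$ heuristic by this observation that the recursion begins at $j=k$.
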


\begin{proof} After applying the previous proposition for the existence of the matrix $H$ which satisfies $H(0)=H_0$, the existence of $G'$ follows directly from Prop. 2.4 in \cite{Ber12}. Hence the claims (1) and (2) hold.\\
The claim (3) is clear since $H\in p^m M_d (\mathcal{O}_\mathbb{E} [[x]])$ implies that $P\equiv P' \text{ mod }p^m$. What it is left to prove is (4), i.e. $G\equiv G' \text{ mod }p^m$. In order to prove this, we need to look at how the matrix $G'$ is defined. \\
The matrix $G'$ is constructed as an $x$-adic limit inside $\text{Id}+x\text{M}_d (\mathcal{O}_\mathbb{E} [[x]])$ (note that we are dealing with non-commutative rings). Define $G'_k:=G$ and observe that it satisfies by construction $G'_k-P' \varphi(G'_k) \gamma (P')^{-1}=x^k R_k$ for some $R_k \in M_d (\mathcal{O}_\mathbb{E} [[x]])$. Then define $G'$ as the $x$-adic limit of $G'_j$, for $j\geq k$, which satisfies $G'_{j+1}=G'_j+x^j S_j$ for some $S_j \in M_d (\mathcal{O}_\mathbb{E})$, and $G'_j-P' \varphi(G'_j) \gamma(P')^{-1}=x^j R_j$ where $R_j \in M_d (\mathcal{O}_\mathbb{E} [[x]])$.\\
We will prove by induction that:

$$
\begin{aligned}
(1)&\;\; R_j \equiv 0 \text{ mod }p^m,\\
(2)&\;\; G'_j\equiv G \text{ mod }p^m.
\end{aligned}
$$
First the case $j=k$: since $H\equiv 0 \text{ mod }p^m$ then $P\equiv P' \text{ mod }p^m$ which implies that $R_k \equiv 0 \text{ mod }p^m$, and by construction $G'_k=G$ so the first case of induction is done.
Now assume $j\geq k$ and that the above claims hold for $j$, we will prove them for $j+1$.\\
We have that there exists $S_j \in M_d (\mathcal{O}_\mathbb{E})$ such that:
$$
\begin{aligned}
G'_{j+1}-P' \varphi(G'_{j+1}) \gamma(P')^{-1}&=G'_j+x^j S_j-P' \varphi(G'_j)\gamma(P')^{-1}-P' x^j Q^j S_j \gamma(P')^{-1}=\\
&=x^j (R_j+S_j-Q^{j-k+1}P' S_j Q^{k-1}) \gamma(P')^{-1}) \in x^{j+1} M_d (\mathcal{O}_\mathbb{E} [[x]]).
\end{aligned}
$$
Note that we used that $\varphi$ acts trivially on $\text{M}_d (\mathcal{O}_\mathbb{E})$ and that $\varphi(x^j)=x^j Q^j$ for all $j\in\mathbb{Z}_{\geq 1}$.
We want to prove that there exists $S_j \in p^m M_d (\mathcal{O}_\mathbb{E})$ such that:
$$
R_j+S_j-Q^{j-k+1} P' S_j Q^{k-1}\gamma(P')^{-1} \in x M_d (\mathcal{O}_\mathbb{E}).
$$
Evaluating the above expression at $x=0$, the claim is equivalent to prove that there exists $S_j \in p^m M_d (\mathcal{O}_\mathbb{E})$ such that:
$$
S_j -p^{j-k+1} P' (0) S_j p^{k-1} (\gamma (P')^{-1})(0)=-R_j (0).
$$
Now, since $R_j \equiv 0 \text{ mod }p^m$, we have that $R_j(0)\equiv 0 \text{ mod }p^m$. It is clear that the map $S\mapsto S-p^{j-k+1} P'(0) S p^{k-1}(\gamma(P')^{-1})(0)$ gives a bijection of $M_d (\mathcal{O}_\mathbb{E})$. Moreover, it is also clear that it is a bijection of $p^m M_d (\mathcal{O}_\mathbb{E})$. As $R_j(0)\equiv 0 \text{ mod }p^m$, we have the existence of $S_j \equiv 0 \text{ mod }p^m$ such that the above relations are satisfied. By inductive hypothesis $R_j \equiv 0 \text{ mod }p^m$, so $R_{j+1} \equiv 0 \text{ mod }p^m$. Since $S_j \equiv 0 \text{ mod } p^m$ implies that $G'_{j+1}=G'_j+x^j S_j \equiv G \text{ mod }p^m$. This concludes the proof.  
\end{proof}

\subsection{Local constancy modulo prime powers with respect to $a_p$}
Here we will apply the continuity properties of the Wach modules to prove local constancy results modulo prime powers. In this section we will focus on fixing the weight and letting the trace vary.
As in the previous sections, let $k\geq 2$ be a positive integer and let $a_p \in \mathfrak{m}_{\mathbb{E}}$.\\
The main result of this section is the following (this is a generalization of Thm. A of \cite{Ber12}):

\begin{theorem} \label{theoremA}
Let $a_p, a'_p \in \mathfrak{m}_\mathbb{E}$ and $k\geq 2$ be an integer. Let $m\in \frac{1}{e} (\mathbb{Z}_{\geq 1})$ such that $v (a_p - a'_p)\geq 2\cdot v (a_p) + \alpha(k-1)+m$, then for every $G_{\mathbb{Q}_p}$-stable lattice $T_{k, a_p}$ inside $V_{k, a_p}$ there exists a $G_{\mathbb{Q}_p}$-stable lattice $T_{k, a'_p}$ inside $V_{k, a'_p}$ such that 
$$T_{k, a_p}\otimes_{\mathcal{O}_\mathbb{E}} \mathcal{O}_\mathbb{E} / (p^m) \cong T_{k, a'_p} \otimes_{\mathcal{O}_\mathbb{E}} \mathcal{O}_\mathbb{E} / (p^m) \text{ as } G_{\mathbb{Q}_p}\text{-modules.}$$
\end{theorem}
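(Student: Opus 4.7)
The plan is to combine the linear-algebra deformation machinery of Section 4.1 with the continuity result of Section 3. Starting from $T_{k,a_p}\subset V_{k,a_p}$, I would take its Wach module $N=\mathfrak{N}(T_{k,a_p})$, which is a free $\mathcal{A}_\mathbb{E}^+$-module of rank $2$ and height $k-1$. Choosing an $\mathcal{A}_\mathbb{E}^+$-basis, the actions of $\varphi$ and $\gamma$ are encoded by matrices $P\in M_2(\mathcal{O}_\mathbb{E}[[x]])$ and $G\in \mathrm{Id}+xM_2(\mathcal{O}_\mathbb{E}[[x]])$ with $P\varphi(G)=G\gamma(P)$ and $\det(P)=Q^{k-1}$. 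The reduction $P_0:=P(0)$ is the matrix of Frobenius on $N/xN\otimes\mathbb{E}\cong D_\mathrm{cris}(V_{k,a_p}^*)=D_{k,a_p}$, so its eigenvalues $\lambda,\mu$ satisfy $\lambda+\mu=a_p$ and $\lambda\mu=p^{k-1}$.

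Next, I would estimate $\delta=\lambda-\mu$: inspecting the Newton polygon of $T^2-a_pT+p^{k-1}$ gives $v(\delta)\le v(a_p)$ in all cases, so the hypothesis $v(a_p-a_p')\ge 2v(a_p)+\alpha(k-1)+m$ implies $v(a_p'-a_p)\ge 2v(\delta)+\alpha(k-1)+m$. Applying Corollary~2.2 with $\epsilon=a_p'-a_p$ and $\alpha=\alpha(k-1)+m$ produces $H_0\in p^{\alpha(k-1)+m}M_2(\mathcal{O}_\mathbb{E})$ with $\det(\mathrm{Id}+H_0)=1$ and $\mathrm{Tr}(H_0P_0)=a_p'-a_p$. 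Feeding $(P,G,H_0)$ into the final proposition of Section 4.1 then yields matrices $P'=(\mathrm{Id}+H)P$ and $G'\in\mathrm{Id}+xM_2(\mathcal{O}_\mathbb{E}[[x]])$ with $P'\varphi(G')=G'\gamma(P')$, $P\equiv P'\bmod p^m$, and $G\equiv G'\bmod p^m$.

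I would then check that the $\mathcal{A}_\mathbb{E}^+$-module $N'$ defined by $(P',G')$ is again a Wach module of height $k-1$. Freeness and commutativity are built in; triviality of $\Gamma$ on $N'/xN'$ follows from $G'(0)=\mathrm{Id}$; and $\det(P')=\det(\mathrm{Id}+H_0)\det(P)=Q^{k-1}$ forces the height condition $N'/\varphi^\ast(N')$ killed by $Q^{k-1}$, while étaleness of $D(N')=\mathcal{A}_\mathbb{E}\otimes N'$ is automatic since it is a $p$-adic deformation of the étale module $D(N)$. By construction, $P'(0)=(\mathrm{Id}+H_0)P_0$ has determinant $p^{k-1}$ and trace $a_p+\mathrm{Tr}(H_0P_0)=a_p'$, hence $\mathfrak{V}(N')^\ast\cong V_{k,a_p'}$, and $T_{k,a_p'}:=\mathfrak{T}(N')$ is the desired $G_{\mathbb{Q}_p}$-stable lattice (up to transferring through the duality of Remark~\ref{dual}, which is compatible with reduction modulo $p^m$).

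Finally, the continuity proposition of Section 3 applied to $N\equiv N'\bmod p^m$ gives $\mathfrak{T}(N)\otimes\mathcal{O}_\mathbb{E}/(p^m)\cong \mathfrak{T}(N')\otimes\mathcal{O}_\mathbb{E}/(p^m)$ as $G_{\mathbb{Q}_p}$-modules, which is the claimed congruence. The main obstacle I anticipate is the careful control of $v(\delta)$ by $v(a_p)$—in particular, handling the borderline case where the Newton polygon has a single slope $(k-1)/2$, which one needs to ensure is absorbed by the factor $2v(a_p)$ in the hypothesis—and verifying that the dualization involved in passing between $V_{k,a_p}$ and $D_{k,a_p}=D_\mathrm{cris}(V_{k,a_p}^\ast)$ preserves the congruence modulo $p^m$ at the level of lattices, which uses the compatibility recorded in Remark~\ref{dual}.
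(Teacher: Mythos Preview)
Your approach is essentially the paper's: attach a Wach module, deform the matrices $(P,G)$ via Section~4.1, and conclude via the continuity of Section~3. Two small corrections are needed. First, because Wach modules of height $h$ parametrize crystalline representations with Hodge--Tate weights in $[-h,0]$ and $V_{k,a_p}$ has weights $\{0,k-1\}$, you should set $N=\mathfrak{N}(T_{k,a_p}^{*})$ from the outset (as the paper does); your line $N/xN\otimes\mathbb{E}\cong D_{\mathrm{cris}}(V_{k,a_p}^{*})$ is already implicitly using this, so the inconsistency is purely notational. Second, $\det(P')=\det(\mathrm{Id}+H)\det(P)$, not $\det(\mathrm{Id}+H_0)\det(P)$; since $H\in p^{m}M_2(\mathcal{O}_\mathbb{E}[[x]])$ one only gets $\det(P')=u\cdot Q^{k-1}$ for a unit $u\in 1+p^{m}\mathcal{O}_\mathbb{E}[[x]]$, which still suffices for the height condition---the paper sidesteps this by computing only $P'(0)$ and invoking Berger's Prop.~1.2 (in \cite{Ber12}) to identify $N'$ directly with the Wach module attached to $V_{k,a_p'}^{*}$.
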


\begin{proof}
Consider the $G_{\mathbb{Q}_p}$-representation $V^{*}_{k, a_p}=\text{Hom}_{\mathbb{E}} (V_{k,a_p}, \mathbb{E})$; it is crystalline and it has Hodge-Tate weights $0$ and $-(k-1)$. 
Let $T_{k, a_p}^{*} :=\text{Hom}_{\mathcal{O}_\mathbb{E}} (T_{k,a_p}, \mathcal{O}_\mathbb{E})$ be the $G_{\mathbb{Q}_p}$-stable lattice in $V_{k, a_p}^{*}$, dual of $T_{k, a_p}$. By a result of Berger (see prop. III.4.2 and III.4.4 in \cite{Ber04}), it is possible to attach to $T_{k, a_p}^{*}$ a Wach module $N_{k, a_p}$ of height $k-1$. Fixing a basis of $N_{k, a_p}$ as $\mathcal{A}_\mathbb{E}^{+}$-module (we recall that in our notation $\mathcal{A}_{\mathbb{E}}^{+}=\mathcal{O}_\mathbb{E} [[x]]$), the actions of $\varphi$ and $\gamma$ on $N_{k, a_p}$ can be respectively represented by the matrices $P\in \text{Mat}_2 (\mathcal{A}_\mathbb{E}^{+})$ and $Q\in \text{Id}+x\text{Mat}_2 (\mathcal{A}_\mathbb{E}^{+})$. Note that since the actions of $\varphi$ and $\Gamma$ commute (in a semi-linear sense) we have that $P\varphi(G)=G\gamma(P)$. We recall also that the matrix $P(0)$ has characteristic polynomial $T^2-a_p T+p^{k-1}$. Now, let $a'_p \in\mathcal{O}_{\mathbb{E}}$ be as in the hypothesis, i.e. it satisfies $v(a_p-a'_p)\geq 2\cdot v(a_p)+\alpha(k-1)+m$ for some $m\in \frac{1}{e} (\mathbb{Z}_{\geq 1})$.
Applying in sequence the results in section 4.1, we deduce the existence of two matrices $P'$ and $G'$ which give rise to a Wach module $N'$ and such that $P\equiv P' \mod p^m$ and $G\equiv G' \mod p^m$. 
Since by construction $P'=(\text{Id}+H)P$, we have that evaluating at $x=0$ we deduce that the characteristic polynomial of $P'(0)$ is $T^2-a'_p T+p^{k-1}$ (note that $\text{Trace}(H(0) P(0))=a'_p-a_p$ and $\text{Det}(\text{Id}+H(0))=1$). By a result of Berger (see prop. 1.2 in \cite{Ber12}), we can deduce that $N'=N_{k, a'_p}$, or equivalently $V_{k, a'_p}=\mathfrak{T}(N')\otimes_{\mathcal{O}_\mathbb{E}} \mathbb{E}$ and $D_{\text{cris}}(V_{k, a'_p}^{*})=N'/xN' \otimes_{\mathcal{O}_\mathbb{E}}\mathbb{E}$.\\
Since $P\equiv P' \mod p^m$ and $G\equiv G' \mod p^m$, we have that $N_{k, a_p}\equiv N_{k, a'_p} \mod p^m$. As a consequence of Remark \ref{exactness}, we have that $\mathfrak{D}(N_k, a_p)\equiv \mathfrak{D}(N_{k, a'_p}) \mod p^m$, i.e.

$$\mathfrak{D}(N_k, a_p)\otimes_{\mathcal{O}_\mathbb{E}} \mathcal{O}_\mathbb{E} /(p^m) \cong \mathfrak{D}(N_{k, a'_p})\otimes_{\mathcal{O}_\mathbb{E}} \mathcal{O}_\mathbb{E} / (p^m) \text{ in the category } \textbf{Mod}^{\text{\'et}}_{(\varphi, \Gamma)} (\mathcal{O}_{\mathbb{E}}).$$
Hence, we define $T_{k, a'_p}:= \mathfrak{T} (\mathfrak{D}(N_{k, a'_p})^{*})$ which is a $G_{\mathbb{Q}_p}$-stable lattice in $V_{k, a'_p}$ that satisfies (since Fontaine's functor $\mathfrak{T}$ is compatible with duals):

$$T_{k, a_p}\otimes_{\mathcal{O}_\mathbb{E}} \mathcal{O}_{\mathbb{E}}/(p^m) \cong T_{k, a'_p}\otimes_{\mathcal{O}_\mathbb{E}} \mathcal{O}_\mathbb{E} / (p^m) \text{ as } G_{\mathbb{Q}_p} \text{-modules.}$$
Indeed, since $\mathfrak{D}(N_{k, a_p})\equiv \mathfrak{D}(N_{k, a'_p}) \mod p^m$, by exactness of Fontaine's functor $\mathfrak{T}$ we can deduce that $T_{k, a_p}^{*} \equiv \mathfrak{T}(\mathfrak{D}(N_{k, a'_p})) \mod p^m$.
This completes the proof of the theorem.
\end{proof}



\subsection{Converse of local constancy with respect to the trace}

Via the continuity properties of the Wach modules, it is also possible to find an explicit necessary condition for the existence of local constancy phenomena modulo prime powers. In precise terms, let $k\geq2$ be an integer and let $a_p ,a'_p \in \mathfrak{m}_{\mathbb{E}}$; then we have the following:

\begin{proposition}
\label{BBB}
Let $m\in\frac{1}{e}(\mathbb{Z}_{\geq 1})$ and assume $m\geq \alpha(k-1)$. If $V_{k, a_p} \equiv V_{k, a'_p} \mod p^m$, then $v(a_p - a'_p)\geq m-\alpha(k-1)$.
\end{proposition}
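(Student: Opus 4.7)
The plan is to invoke Berger's continuity result (Proposition \ref{necessarylocalconstancy}) in the reverse direction: a congruence between lattices forces a congruence between the associated Wach modules, and then evaluation at $x=0$ extracts the required congruence between the traces $a_p$ and $a'_p$.

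First, I would pass to duals. The congruence $V_{k,a_p}\equiv V_{k,a'_p}\bmod p^m$ is shorthand for the existence of $G_{\mathbb{Q}_p}$-stable lattices $T_{k,a_p}\subset V_{k,a_p}$ and $T_{k,a'_p}\subset V_{k,a'_p}$ such that $T_{k,a_p}\otimes \mathcal{O}_\mathbb{E}/(p^m)\cong T_{k,a'_p}\otimes \mathcal{O}_\mathbb{E}/(p^m)$ as $G_{\mathbb{Q}_p}$-modules. Dualizing, the lattices $T^*_{k,a_p}\subset V^*_{k,a_p}$ and $T^*_{k,a'_p}\subset V^*_{k,a'_p}$ sit inside crystalline representations with Hodge--Tate weights in $[-(k-1),0]$, and they are still congruent modulo $p^m$ (cf. Remark \ref{dual}, or more elementarily because the $\mathcal{O}_\mathbb{E}$-linear dual is exact on lattices).

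Next, I would apply Proposition \ref{necessarylocalconstancy} to $T^*_{k,a_p}$ and $T^*_{k,a'_p}$ with $r=k-1$ and $n=m$; the hypothesis $m\geq \alpha(k-1)$ is exactly what is needed to invoke this result. It yields
$$N_{k,a_p}:=\mathfrak{N}(T^*_{k,a_p})\equiv \mathfrak{N}(T^*_{k,a'_p})=:N_{k,a'_p}\pmod{p^{m-\alpha(k-1)}}.$$
By the matricial characterization of congruences of Wach modules recalled in Section 3, this means we can choose bases of $N_{k,a_p}$ and $N_{k,a'_p}$ over $\mathcal{A}^+_\mathbb{E}$ in which the matrices $P, P'\in \mathrm{M}_2(\mathcal{A}^+_\mathbb{E})$ of the crystalline Frobenius satisfy $P\equiv P'\bmod p^{m-\alpha(k-1)}$.

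Finally, I would evaluate at $x=0$. Specializing, $P(0)$ and $P'(0)$ represent the $\varphi$-action on $N_{k,a_p}/xN_{k,a_p}$ and $N_{k,a'_p}/xN_{k,a'_p}$; after inverting $p$ these recover $D_{\mathrm{cris}}(V^*_{k,a_p})=D_{k,a_p}$ and $D_{\mathrm{cris}}(V^*_{k,a'_p})=D_{k,a'_p}$, whose Frobenii have characteristic polynomials $T^2-a_pT+p^{k-1}$ and $T^2-a'_pT+p^{k-1}$ respectively. Since $P(0)\equiv P'(0)\bmod p^{m-\alpha(k-1)}$, their traces agree modulo the same power, i.e. $a_p\equiv a'_p\bmod p^{m-\alpha(k-1)}$, giving the desired bound $v(a_p-a'_p)\geq m-\alpha(k-1)$.

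The only subtle point is that the isomorphism of Wach modules mod $p^{m-\alpha(k-1)}$ is only defined up to change of basis, so the matrices $P$ and $P'$ themselves are only congruent after an invertible conjugation. However, the trace of $P(0)$ is basis-independent, so the conclusion on characteristic polynomials (and hence on $a_p-a'_p$) is unaffected; this is the step I expect to be the only one requiring some care in the write-up.
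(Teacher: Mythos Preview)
Your proof is correct and follows essentially the same route as the paper: apply Proposition~\ref{necessarylocalconstancy} to the dual lattices to obtain $N_{k,a_p}\equiv N_{k,a'_p}\bmod p^{m-\alpha(k-1)}$, then read off the congruence of traces from the characteristic polynomial of $\varphi$ on $N/xN$. Your write-up is in fact more careful than the paper's, which leaves the dualization step implicit and does not comment on basis-independence.
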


\begin{proof}
This is a straightforward application of Berger's Proposition \ref{necessarylocalconstancy}. Indeed, we have that $D_{\text{cris}} (V^{*}_{k, a_p})=N_{k, a_p}/x N_{k, a_p}\otimes \mathbb{E}$ and $D_{\text{cris}} (V^{*}_{k, a'_p})=N_{k, a'_p}/x N_{k, a'_p}\otimes \mathbb{E}$ for some Wach modules $N_{k, a_p}$ and $N_{k, a'_p}$ corresponding respectively to the $G_{\mathbb{Q}_p}$-stable lattices in $V_{k, a_p}$ and $V_{k, a'_p}$ that are congruent modulo $p^m$. By proposition 3.1, we have that $N_{k, a_p}\equiv N_{k, a'_p} \mod p^{m-\alpha(k-1)}$ and looking at the characteristic polynomials of $\varphi$ acting on $N_{k, a_p}/x N_{k, a_p}$ and $N_{k, a'_p}/x N_{k, a_p}$ the claim follows.\end{proof}

\section{Local constancy with respect to the weight}
In this section, we are going to prove a local constancy result for reductions modulo prime powers once we fix the trace of the crystalline Frobenius $a_p$ and we let the weight $k$ vary. \\
In order to simplify the notation, we will say that two $\mathbb{E}$-linear representations $V$ and $V'$ of $G_{\mathbb{Q}_p}:=\text{Gal}(\overline{\mathbb{Q}}_p / \mathbb{Q}_p)$ are congruent modulo some prime power (i.e. $V\equiv V' \mod \pi^n$ for some $n\in\mathbb{Z}_{\geq 1}$) if there exist $G_{\mathbb{Q}_p}$-stable lattices $T\subset V$ and $T' \subset V'$ such that we have an isomorphism $$T\otimes_{\mathcal{O}_\mathbb{E}} \mathcal{O}_\mathbb{E} / (\pi^n) \cong T'\otimes_{\mathcal{O}_\mathbb{E}} \mathcal{O}_\mathbb{E} / (\pi^n) \;\;\text{ of }G_{\mathbb{Q}_p} \text{-modules.}$$
Note that the above definition requires a bit of attention when used as it clearly doesn't define an equivalence relation (in general, it is not symmetric nor transitive). 
The main result of this section is the following:

\begin{theorem}
\label{weight}
Let $p$ be an odd prime. Let $a_p\in m_\mathbb{E} -\{0\}$ for some finite extension $\mathbb{E}/\mathbb{Q}_p$. Let $k\geq 2$ be an integer and $m\in\frac{1}{e} (\mathbb{Z}_{\geq1})$ be fixed. Assume that
$$k \geq (3 v(a_p) +m)\cdot \Big(1-\frac{p}{(p-1)^2}\Big)^{-1} +1. \quad\quad\quad\quad\quad (*)$$
There exists an integer $r=r(k, a_p)\geq 1$ such that if $k'-k\in p^{r+m} (p-1)\mathbb{Z}_{\geq 0}$ then $V_{k, a_p}\equiv V_{k', a_p} \mod p^m$ 

\end{theorem}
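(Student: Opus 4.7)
The plan is to realize $V_{k, a_p}$ and $V_{k', a_p}$ as specializations of an analytic family of trianguline $(\varphi, \Gamma)$-modules (in the sense of Berger--Colmez \cite{BC08}) parametrized by the weight, and to exploit the continuity of such families. This yields what is essentially the crystalline converse to the theorem of Wintenberger/Berger--Colmez (Cor. 7.1.2 in \cite{BC08}).

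First, since $a_p \neq 0$, the crystalline Frobenius on $D_{\text{cris}}(V_{k, a_p}^*)$ has two distinct eigenvalues $\lambda, \mu \in \mathbb{E}$ (up to enlarging $\mathbb{E}$) with $\lambda + \mu = a_p$ and $\lambda \mu = p^{k-1}$. Picking the refinement attached to $\lambda$ equips $D_{\text{rig}}(V_{k, a_p}^*)$ over the Robba ring with a triangulation
$$0 \to \mathcal{R}_{\mathbb{E}}(\delta_1) \to D_{\text{rig}}(V_{k, a_p}^*) \to \mathcal{R}_{\mathbb{E}}(\delta_2) \to 0,$$
for continuous characters $\delta_1, \delta_2 : \mathbb{Q}_p^\times \to \mathbb{E}^\times$ determined by $\lambda$, $\mu$ and the Hodge--Tate weights $\{0, -(k-1)\}$ of the dual. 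The same refinement procedure applies to $V_{k', a_p}^*$, and the hypothesis $(\ast)$ forces $v(\lambda) = v(a_p) < k-1$, so the refinement remains non-critical throughout the weights to be considered.

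Next, following Berger--Colmez \cite{BC08}, these trianguline $(\varphi, \Gamma)$-modules embed into a rigid analytic family $W_\kappa$ over an affinoid neighborhood $U$ of $k$ inside the weight space $\mathcal{W}$: one fixes $\delta_1$ and lets the second parameter vary as $\kappa \mapsto (\mu / p^{k-1}) \cdot x^{\kappa - 1}$. The explicit bound $(\ast)$ is precisely what guarantees that the family is well-defined (free of rank $2$, with controlled extension class) on a disc containing all the integer weights $k'$ with $k' - k \in p^{r+m}(p-1)\mathbb{Z}_{\geq 0}$: the factor $(1 - p/(p-1)^2)^{-1}$ reflects the convergence radius estimate for trianguline deformations, while $3 v(a_p)$ measures the slope of the refinement against the Hodge--Tate weights. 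For integer $\kappa = k' \geq k$, non-criticality guarantees that $W_{k'}$ is the unique crystalline trianguline representation with the prescribed parameters, hence canonically identified with $V_{k', a_p}^*$.

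The final step will be to apply the Berger--Colmez continuity theorem (Thm. 7.1.1 and Cor. 7.1.2 in \cite{BC08}): there exists an integer $r = r(k, a_p) \geq 1$ such that whenever $\kappa, \kappa' \in U$ satisfy $\chi(\gamma)^{\kappa - \kappa'} \equiv 1 \pmod{p^{r+m}}$, the specializations $W_\kappa$ and $W_{\kappa'}$ are congruent modulo $p^m$ as $G_{\mathbb{Q}_p}$-modules; and the divisibility $k' - k \in p^{r+m}(p-1)\mathbb{Z}_{\geq 0}$ is precisely this closeness condition in $\mathcal{W}$ at integer weights (this is where the factor $(p-1)$ comes from). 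Combined with the identification $W_{k'} \cong V_{k', a_p}^*$ from the previous paragraph and dualizing, this yields $V_{k, a_p} \equiv V_{k', a_p} \pmod{p^m}$. The hard part will be making $r(k, a_p)$ effective from the analytic-family side: one has to control the $p$-adic norm of the extension cocycles in $\text{Ext}^1_{(\varphi, \Gamma)}(\mathcal{R}_{\mathbb{E}}(\delta_{2, \kappa}), \mathcal{R}_{\mathbb{E}}(\delta_1))$ uniformly as $\kappa$ varies in $U$, and verify that crystallinity of $W_{k'}$ genuinely persists at every integer weight $k'$ satisfying the divisibility condition --- this is exactly where the precise shape of $(\ast)$ is consumed.
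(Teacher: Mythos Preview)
Your overall strategy---realize both representations as specializations of a trianguline family and use continuity---is the paper's as well, but there are two genuine gaps in the execution.

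First, you skip the reduction step that makes the family work. You propose to triangulate $V_{k,a_p}^*$ directly and ``fix $\delta_1$'' while varying the weight, but the Frobenius eigenvalues $\lambda,\mu$ depend on $k$ (since $\lambda\mu=p^{k-1}$), so neither triangulation character can be held fixed as $k$ varies. The paper instead first applies Theorem~\ref{theoremA} (local constancy in the trace) to replace $V_{k,a_p}$ by $V_{k,\,a_p+p^{k-1}/a_p}$ modulo $p^m$: now $a_p$ itself is a Frobenius eigenvalue, the triangulation characters become $\mu_{a_p}$ and $\mu_{1/a_p}\chi^{1-k}$, and only the exponent of $\chi$ moves with the weight. \emph{This} is where condition $(\ast)$ is consumed: Theorem~\ref{theoremA} requires $v(p^{k-1}/a_p)\geq 2v(a_p)+\alpha(k-1)+m$, i.e.\ $k-1\geq 3v(a_p)+\alpha(k-1)+m$, and the factor $\bigl(1-\tfrac{p}{(p-1)^2}\bigr)^{-1}$ just comes from the crude bound $\alpha(k-1)\leq (k-1)\tfrac{p}{(p-1)^2}$. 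Your interpretation of $(\ast)$ as a convergence-radius estimate for the trianguline family, with $3v(a_p)$ measuring slope versus Hodge--Tate weights, is not what is happening.

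Second, you invoke Thm.~7.1.1 / Cor.~7.1.2 of \cite{BC08} for the implication ``weights close $\Rightarrow$ representations congruent,'' but that result goes the other way (congruent representations $\Rightarrow$ close Hodge--Tate/Sen data); the present theorem is explicitly a \emph{converse} to it, so you cannot simply cite it. The paper instead uses a result of Colmez and Chenevier (Prop.~5.2 in \cite{Col08}, Prop.~3.9 in \cite{Che13}) producing a free rank-$2$ $\mathcal{O}_{\mathfrak{U}}$-module $\mathbb{V}$ with $G_{\mathbb{Q}_p}$-action over an affinoid neighborhood $\mathfrak{U}\subset\mathfrak{X}^2$, pulls it back along an explicit rigid-analytic map $s\mapsto(\mu_{a_p},\,\mu_{1/a_p}\omega^{1-k}\langle\cdot\rangle^{s})$ from a closed disc $\mathcal{Z}_r$, constructs an integral lattice $\mathbb{T}_{\mathcal{Z}_r}$ using profiniteness of $G_{\mathbb{Q}_p}$, and then reads off the congruence from the elementary Lipschitz estimate $|g(x)-g(y)|\leq p^{r}|g|_r\,|x-y|$ on the affinoid algebra. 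The integer $r$ arises from the admissible topology (finding a small disc inside $\Phi^{-1}(\mathfrak{U})$), not from any uniform bound on extension cocycles in $\mathrm{Ext}^1_{(\varphi,\Gamma)}$.
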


\begin{remark} \normalfont 
 As it will be clear from the proof below, the condition in the hypothesis is not optimal in the sense that it can be replaced by the weaker condition given by the system:
 
$$
\begin{cases} 
k\geq3v(a_p)+\alpha(k-1)+1+m, \\
k'\geq3v(a_p)+\alpha(k'-1)+1+m.
\end{cases}
$$
as in Berger's result (see Thm. B in \cite{Ber12}).
For the sake of simplicity, we just assume a stronger condition which has the advantage that it is explicit in the weight, doesn't depend on the function $\alpha$ and automatically holds for $k'$ if it holds for $k$ assuming $k'\geq k$.\\
The condition in the theorem can be deduced directly from the above conditions by noticing that $\alpha(k-1)=\sum_{n\geq 1} \big\lfloor\frac{k-1}{p^{n-1} (p-1)} \big\rfloor$ satisfies the inequality $\alpha(k-1)\leq \frac{(k-1)p}{(p-1)^2}.$
\end{remark}

\begin{remark}\normalfont
Note that Theorem \ref{weight} and Theorem \ref{theoremA} can be applied in sequence (i.e. one can first deform the weight and then deform the trace) in order to have a local constancy result in which both the trace and the weight vary. Note that the order in which the theorems can be applied in sequence cannot be switched, as it is always necessary to keep track of the lattices involved in the congruences. 
\end{remark}

\begin{remark}\normalfont
It could be interesting to consider the question of finding explicitly a radius for the local constancy in the weight. Partial results have been obtained by Bhattacharya (see \cite{Bha18}). As already pointed out in the introduction, the above theorem can be seen as a converse (in a special crystalline case) of a non-published theorem of Winterberger, proven by Berger and Colmez as a consequence of a continuity property of the Sen periods and the Hodge-Tate weights (see \cite{BC08}). Their result provides a connection between the local constancy radius of our theorem and the constant $c(2, \mathbb{Q}_p)$ of the result of Berger and Colmez. To be precise, combining theorem \ref{weight} above and cor. 7.1.2 in \cite{BC08} we get the upper bound on the radius for the local constancy in weight $p^{-(r+m)}\leq p^{-(\lfloor \frac{m}{2}\rfloor -c(2, \mathbb{Q}_p))}$.
\end{remark}
In order to prove the theorem, the idea is to make use of Kedlaya's theory of $(\varphi,\Gamma)$-modules of slope zero over the Robba ring (see \cite{Ked04}) and to realize the representations $V_{k, a_p}$ (for $k$ suff. big) as trianguline representations in the sense of Colmez (see \cite{Col08}). A theorem of Colmez will then ensure us that locally such representations vary in a continuous way. We will make this precise in the next section.
We refer the reader to \cite{Ber11} for a nice summary on the theory of trianguline representations and its applications in arithmetic geometry. \\
Let $\mathcal{R}_\mathbb{E}$ be the Robba ring with coefficients in $\mathbb{E}$ and for any multiplicative character $\delta:\mathbb{Q}_p^{\times} \rightarrow \mathbb{E}^\times$, denote by $\mathcal{R}_{\mathbb{E}} (\delta):=\mathcal{R}_\mathbb{E} e_{\delta}$ the $(\varphi, \Gamma)$-module (in the sense of Kedlaya, see \cite{Ked04}) of rank one obtained by defining the actions $\varphi(e_{\delta})=\delta(p) e_{\delta}$ and $\gamma(e_\delta)=\delta(\chi(\gamma)) e_\delta$ for all $\gamma \in\Gamma$, where $\chi$ denotes the chosen fixed isomorphism between $\Gamma$ and $\mathbb{Z}_p^{\times}$.\\
Colmez (see Thm. 0.2 in \cite{Col08}) proved that all $(\varphi, \Gamma)$-modules of rank one arise as $\mathcal{R}_\mathbb{E} (\delta)$ for a unique multiplicative character $\delta$; moreover, if $\delta_1$, $\delta_2 : \mathbb{Q}_p^\times \rightarrow \mathbb{E}^\times$ are multiplicative characters then $\text{Ext}^1 (\mathcal{R}_\mathbb{E} (\delta_1), \mathcal{R}_\mathbb{E} (\delta_2))$ is an $\mathbb{E}$-vector space of dimension 1 unless 
$\delta_1 \delta_2^{-1}$ is of the form $x^{-i}$ for some integer $i\geq0$, or $|x|x^i$ for some integer $i\geq 1$; in both cases, the dimension over $\mathbb{E}$ is two and the attached projective space is isomorphic to $\mathbb{P}^1 (\mathbb{E})$; here $x$ denotes the identity character of $\mathbb{Q}_p^{\times}$.\\
Hence, where the extension is not unique (up to isomorphism), one will need to specify the corresponding parameter in $\mathbb{P}^1 (\mathbb{E})$ usually called $L$-invariant and denoted as $\mathfrak{L}$. 
The corresponding Galois representation will be denoted by $V(\delta_1, \delta_2, \mathfrak{L})$. For an extensive discussion about $\mathfrak{L}$-invariant, we refer the reader to the original article of Colmez (see sec. 4.5 in \cite{Col08}).\\
Each trianguline representation $V(\delta_1, \delta_2)$ corresponds (up to considering blow-up in case $\delta_1 \delta_2^{-1} =x^{-i}$ or $|x|x^i$; see \cite{Col08}) to the point $(\delta_1, \delta_2)\in \mathfrak{X}\times \mathfrak{X}$ where $\mathfrak{X}$ is isomorphic (non-canonically, as there are choices involved) to the $\mathbb{Q}_p$-rigid analytic space $\mu(\mathbb{Q}_p)\times \mathbb{G}_m^{\text{rig}}\times \mathbb{B}^1 (1, 1)^{-}_{\mathbb{Q}_p}$.\\ From now on, we denote by $\mathbb{B}^1 (a, r)^{+}_{\mathbb{Q}_p}$ the closed affinoid rigid $\mathbb{Q}_p$-ball centered in $a$ and with radius $r$. The expression $\mathbb{B}^1 (a, r)^{-}_{\mathbb{Q}_p}$ will instead denote the open rigid $\mathbb{Q}_p$-ball as a $\mathbb{Q}_p$-rigid analytic space which parametrizes multiplicative characters $\delta: \mathbb{Q}_p^{\times} \rightarrow \mathbb{L}^\times$ where $\mathbb{L}$ is a finite extension of $\mathbb{Q}_p$.

\subsection{Proof of the theorem}
Let $k'$ be an integer satisfying $k'-k \in(p-1)\mathbb{Z}_{\geq 0}$. The claim is to prove that if $k'$ and $k$ are sufficiently $p$-adically close then the corresponding representations are isomorphic modulo a prescribed prime power. \\
The assumption $(*)$ on the weight $k$ allow us, applying Theorem \ref{theoremA}, to deduce that:

$$
\begin{aligned}
V_{k, a_p+\frac{p^{k-1}}{a_p}} &\equiv V_{k, a_p} \mod p^m , \\
 V_{k', a_p+\frac{p^{k'-1}}{a_p}}  &\equiv V_{k', a_p} \mod p^m .
\end{aligned}
$$
Indeed, note that assumption $(*)$ implies that $k-1>2v(a_p)$ and hence in this specific case, the Theorem  \ref{theoremA} can be applied both starting from $V_{k, a_p}$ or starting from $V_{k,  a_p+\frac{p^{k-1}}{a_p}}$ (same goes for $k'$) and hence this gives us a strong control over the lattices involved in the congruences. 
Therefore, this first step reduces the claim to prove that if $k'$ and $k$ are sufficiently $p$-adically close (in the weight space) then we have the congruence $V_{k, a_p+\frac{p^{k-1}}{a_p}} \equiv V_{k', a_p+\frac{p^{k'-1}}{a_p}} \mod p^m$.\\
The following proposition of Colmez (see prop. 3.1 in \cite{Ber12} or see sec. 4.5 in \cite{Col08}) allow us to realize the above representations as trianguline representations:

\begin{proposition}
If $z\in\mathfrak{m}_\mathbb{E}$ is a root of $z^2-a_p z+p^{k-1}$ which satisfies $v(z)<k-1$, then we have that $V(\mu_z , \mu_{\frac{1}{z}} \chi^{1-k}, \infty)=V_{k, a_p}^{*}$.\\
Here $\mu_z :\mathbb{Q}_p^{\times}\rightarrow \mathbb{E}^{\times}$ is a character such that $\mu_z (p)=z$ and $\mu_z (\mathbb{Z}_p^{\times})=1$ and $\chi :\mathbb{Q}_p^{\times}\rightarrow \mathbb{E}^{\times}$ is a character such that $\chi(p)=1$ and $\chi(y)=y$ for all $y\in\mathbb{Z}_p^{\times}$.
\end{proposition}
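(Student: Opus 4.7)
The plan is to exhibit $V_{k,a_p}^{*}$ as a trianguline representation in Colmez's sense by constructing an explicit triangulation of its $(\varphi,\Gamma)$-module $D_{\mathrm{rig}}^{\dagger}(V_{k,a_p}^{*})$ over the Robba ring, starting from a $\varphi$-eigenvector of $D_{\mathrm{cris}}(V_{k,a_p}^{*})=D_{k,a_p}$, and then reading off the parameters $(\delta_1,\delta_2)=(\mu_z,\mu_{1/z}\chi^{1-k})$ from the linear algebra on $D_{k,a_p}$.

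The principal tool is Berger's theorem that for any crystalline $\mathbb{E}$-representation $V$ of $G_{\mathbb{Q}_p}$ there is a canonical $(\varphi,\Gamma)$-equivariant inclusion
$$D_{\mathrm{rig}}^{\dagger}(V)\;\hookrightarrow\;\mathcal{R}_{\mathbb{E}}[1/t]\otimes_{\mathbb{E}} D_{\mathrm{cris}}(V),$$
becoming an equality after inverting $t$, with $D_{\mathrm{rig}}^{\dagger}(V)$ cut out of the right-hand side by the Hodge filtration on $D_{\mathrm{cris}}(V)$ (see \cite{Ber04}). Applied to $V_{k,a_p}^{*}$ this transports the entire problem to the explicit module $D_{k,a_p}$. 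Since $z$ is a root of the characteristic polynomial $T^{2}-a_pT+p^{k-1}$ of $\varphi$ on $D_{k,a_p}$, the vector $v_z=e_1-ze_2$ is a $\varphi$-eigenvector with eigenvalue $z$; the other root $z'=p^{k-1}/z$ has $v(z')=k-1-v(z)>v(z)$, so $z$ is the "small-slope" choice.

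The second step is to form the $\mathcal{R}_{\mathbb{E}}[1/t]$-line $\mathcal{R}_{\mathbb{E}}[1/t]\cdot(1\otimes v_z)$, which is $(\varphi,\Gamma)$-stable inside $\mathcal{R}_{\mathbb{E}}[1/t]\otimes D_{k,a_p}$, and intersect it with $D_{\mathrm{rig}}^{\dagger}(V_{k,a_p}^{*})$ to produce a saturated rank-one submodule $\mathcal{L}$. Here the hypothesis $v(z)<k-1$ enters: since $z\neq 0$, one has $v_z\notin \mathbb{E}e_1=\mathrm{Fil}^{k-1}D_{k,a_p}$, so $v_z$ belongs to $\mathrm{Fil}^{0}\setminus\mathrm{Fil}^{1}$ and Berger's filtered recipe yields $\mathcal{L}=\mathcal{R}_{\mathbb{E}}\cdot(1\otimes v_z)$ with no $t$-twist in the saturation. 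On the generator $\varphi$ acts by $z$ and $\Gamma$ acts trivially (as $v_z\in D_{\mathrm{cris}}$ is $\Gamma$-fixed), so $\mathcal{L}\cong\mathcal{R}_{\mathbb{E}}(\mu_z)$. The quotient $D_{\mathrm{rig}}^{\dagger}(V_{k,a_p}^{*})/\mathcal{L}$ is then a rank-one $(\varphi,\Gamma)$-module, hence $\mathcal{R}_{\mathbb{E}}(\delta_2)$ for a unique $\delta_2$ by Colmez's classification, and computing determinants (from $\det(\varphi)=p^{k-1}$ on $D_{k,a_p}$ and total Hodge-Tate weight $-(k-1)$ of $V_{k,a_p}^{*}$) gives $\det D_{\mathrm{rig}}^{\dagger}(V_{k,a_p}^{*})\cong\mathcal{R}_{\mathbb{E}}(\chi^{1-k})$. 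Thus $\mu_z\cdot\delta_2=\chi^{1-k}$, forcing $\delta_2=\mu_{1/z}\chi^{1-k}$.

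For the $\mathfrak{L}$-invariant: generically $\delta_1\delta_2^{-1}=\mu_{z^{2}}\chi^{k-1}$ is not of the special form $x^{-i}$ or $|x|x^{i}$, so the extension class is unique and the invariant is formal; in the exceptional cases, the convention $\mathfrak{L}=\infty$ picks out the crystalline (as opposed to merely semistable) extension, which is precisely the one realized by $V_{k,a_p}^{*}$ by construction. The main technical obstacle is the saturation step together with the careful bookkeeping of Hodge-Tate weights: Berger's recipe cuts $D_{\mathrm{rig}}^{\dagger}(V)$ out of $\mathcal{R}_{\mathbb{E}}[1/t]\otimes D_{\mathrm{cris}}(V)$ by a filtration-dependent prescription, and one must check that applying it to the line $\mathcal{R}_{\mathbb{E}}[1/t]\cdot v_z$ introduces no factor of $t$; this is exactly where $v(z)<k-1$ (so that $v_z$ avoids $\mathrm{Fil}^{k-1}$) is used, and choosing $z'$ instead would force a twist by $t^{k-1}$ and produce the parallel triangulation. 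A secondary subtlety is aligning Colmez's characters on $\mathbb{Q}_p^{\times}$ with Galois characters via local class field theory, so that $\chi$ matches the cyclotomic character and $\mu_z$ matches the unramified character sending geometric Frobenius to $z$ under the normalization implicit in the definition of $V_{k,a_p}$.
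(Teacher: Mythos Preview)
The paper does not prove this proposition: it is quoted as a result of Colmez (section 4.5 of \cite{Col08}, restated as Prop.~3.1 in \cite{Ber12}). Your argument is precisely the standard route to it---Berger's comparison $D_{\mathrm{rig}}^{\dagger}(V)\hookrightarrow \mathcal{R}_{\mathbb{E}}[1/t]\otimes D_{\mathrm{cris}}(V)$, then the $\varphi$-eigenvector $v_z=e_1-ze_2$ in $D_{k,a_p}$ to produce a saturated rank-one sub $\mathcal{R}_{\mathbb{E}}(\mu_z)$, and a determinant calculation for the quotient character. The eigenvector check and the identification of $\delta_2$ are correct.

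Two small points deserve tightening. First, your claim that ``this is exactly where $v(z)<k-1$ is used'' is not accurate: what you actually invoke to get $v_z\notin\mathrm{Fil}^1=\mathbb{E}e_1$ is merely $z\neq 0$, and that already follows from $zz'=p^{k-1}$. In the setting of this paper ($a_p\in\mathfrak{m}_\mathbb{E}$) every root automatically satisfies $0<v(z)<k-1$, so the hypothesis is redundant here; it is inherited from Colmez's more general statement (where $a_p$ may be a unit and the boundary cases $v(z)\in\{0,k-1\}$ correspond to reducible $V_{k,a_p}$). Second, your parenthetical ``choosing $z'$ instead would force a twist by $t^{k-1}$'' is wrong: since $z'\neq 0$ as well, the eigenvector $v_{z'}=e_1-z'e_2$ also lies in $\mathrm{Fil}^0\setminus\mathrm{Fil}^1$, and the same construction yields the \emph{other} triangulation $\mathcal{R}_{\mathbb{E}}(\mu_{z'})\hookrightarrow D_{\mathrm{rig}}^{\dagger}(V_{k,a_p}^*)$ with no $t$-twist. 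The two triangulations are the two refinements of the crystalline representation; neither is distinguished by a saturation obstruction. This error is only in a side remark and does not affect your main line of argument.
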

Hence, if $k'-1\geq k -1> v(a_p)$, we have that the crystalline representations $V_{k, a_p+\frac{p^{k-1}}{a_p}}$ and $V_{k', a_p+\frac{p^{k'-1}}{a_p}} $ coincide respectively with the trianguline representations $V(\mu_{a_p}, \delta_{k, a_p}, \infty)$ and $V(\mu_{a_p}, \delta_{k', a_p}, \infty)$, where $\delta_{k, a_p}:=\mu_{\frac{1}{a_p}} \chi^{1-k}$ and $\delta_{k', a_p}=\mu_{\frac{1}{a_p}} \chi^{1-k'}$.
Since the $\mathfrak{L}$-invariant is going to be $\infty$ for all the trianguline representations involved, we will drop the notation $V(\cdot, \cdot, \infty)$ writing simply $V(\cdot, \cdot)$. \\
The trianguline representations $V(\mu_{a_p}, \delta_{k, a_p})$ and $V(\mu_{a_p}, \delta_{k', a_p})$ define two $\mathbb{E}$-points, respectively $u_{k, a_p}=(\mu_{a_p}, \delta_{k, a_p})$ and $u_{k', a_p}=(\mu_{a_p}, \delta_{k', a_p})$, on the rigid analytic space $\mathfrak{X}\times \mathfrak{X}$ (see \cite{Col08}) parametrizing couples of multiplicative characters of $\mathbb{Q}_p^{\times}$ with values in $\mathbb{L}^{\times}$ where $\mathbb{L}$ is some finite extension of $\mathbb{Q}_p$.\\
The following lemma represents the first step for constructing a family of trianguline representations interpolating $V(\mu_{a_p}, \delta_{k, a_p})$ and $V(\mu_{a_p}, \delta_{k', a_p})$ when $k$ and $k'$ will be sufficiently $p$-adically close (in the weight space):
\begin{lemma} \label{morphism}
Let $\alpha\in 1+p\mathbb{Z}_p$, then we have that 
$$
\begin{aligned}
\psi_{\alpha} : \mathbb{B}^{1} (0,1)_{\mathbb{Q}_p}^{+} &\longrightarrow \mathbb{B}^{1} (1,|\alpha-1|)_{\mathbb{Q}_p}^{+}\\
[s] &\mapsto [\text{exp}_p (s \cdot \text{log}_p (\alpha))]
\end{aligned}
$$
is an isomorphism in the category of $\mathbb{Q}_p$-rigid analytic spaces. Here $[s]$ denotes the maximal ideal of $\mathbb{Q}_p \langle T \rangle$ corresponding to the element $s\in\overline{\mathbb{Z}}_p$ and the analogue for $\psi_\alpha ([s])$.
\end{lemma}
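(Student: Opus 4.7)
The plan is to construct an explicit inverse map and reduce the statement to the mutual inversion of $\exp_p$ and $\log_p$ on appropriate disks. First I would observe that for $\alpha \in 1+p\mathbb{Z}_p$ with $p$ odd, the series $\log_p(\alpha) = \sum_{n\geq 1} (-1)^{n+1} (\alpha-1)^n/n$ converges and satisfies $v(\log_p(\alpha))=v(\alpha-1) \geq 1$, since for $n\geq 2$ one has $nv(\alpha-1)-v(n) > v(\alpha-1)$ (using $v(n)\leq \log_p n$ and $v(\alpha-1)\geq 1$). In particular $\log_p(\alpha)$ is a unit in $\mathbb{E}$ whose inverse has valuation $-v(\alpha-1)$, which will be crucial when dividing by it.

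Next I would translate the claim into a statement about affinoid algebras. Since $\mathbb{B}^1(0,1)^+_{\mathbb{Q}_p}=\mathrm{Sp}(\mathbb{Q}_p\langle T\rangle)$ and $\mathbb{B}^1(1,|\alpha-1|)^+_{\mathbb{Q}_p}=\mathrm{Sp}(\mathbb{Q}_p\langle (T-1)/(\alpha-1)\rangle)$, defining $\psi_\alpha$ as a morphism of rigid spaces is equivalent to showing that $T\mapsto \exp_p(T\log_p\alpha) = \sum_{n\geq 0} (\log_p\alpha)^n T^n/n!$ gives a well-defined homomorphism in the opposite direction between these Tate algebras. The convergence estimate $v((\log_p\alpha)^n/n!) \geq n\,v(\alpha-1) - n/(p-1)$ makes the coefficients go to $+\infty$ (since $v(\alpha-1)\geq 1 > 1/(p-1)$), so the image lies in $\mathbb{Q}_p\langle T\rangle$; one then checks that the generator $(T-1)/(\alpha-1)$ is sent to an element of sup-norm $\leq 1$, because the $n=1$ term has valuation $0$ and all higher terms have strictly positive valuation.

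For the inverse I would define $\psi_\alpha^{-1}: [t]\mapsto [\log_p(t)/\log_p(\alpha)]$. The essential point is that if $v(t-1)\geq v(\alpha-1)\geq 1$, then $\log_p(t)$ converges, $v(\log_p(t)) = v(t-1) \geq v(\alpha-1) = v(\log_p(\alpha))$, so the quotient lies in $\mathcal{O}_\mathbb{E}$ and the map indeed lands in $\mathbb{B}^1(0,1)^+_{\mathbb{Q}_p}$. At the level of affinoid algebras, setting $U=(T-1)/(\alpha-1)$, the map sends $T \mapsto (\log_p\alpha)^{-1}\sum_{n\geq 1}(-1)^{n+1}(\alpha-1)^n U^n/n$, whose coefficient of $U^n$ has valuation $(n-1)v(\alpha-1)-v(n)\geq 0$, so the whole series lies in $\mathbb{Z}_p\langle U\rangle$, giving the desired Tate algebra homomorphism.

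Finally, the fact that $\psi_\alpha$ and $\psi_\alpha^{-1}$ are mutually inverse reduces to the classical identities $\exp_p(\log_p(z))=z$ for $v(z-1)>1/(p-1)$ and $\log_p(\exp_p(z))=z$ for $v(z)>1/(p-1)$, both of which apply in the ranges we work in. The main (and only mild) obstacle is the bookkeeping of radii and normalizations needed to verify that each map truly lands in the specified closed ball and induces a bounded Tate algebra homomorphism; once those are in place, the proof is formal.
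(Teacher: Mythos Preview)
Your proposal is correct and follows essentially the same strategy as the paper: construct $\psi_\alpha$ and its inverse $[t]\mapsto[\log_p(t)/\log_p(\alpha)]$ as morphisms of affinoid algebras, and appeal to the classical mutual inversion of $\exp_p$ and $\log_p$ on the relevant disks. The one notable difference is tactical. The paper frames the issue as a problem about composition of $p$-adic analytic functions (which is not analytic in general) and resolves it by invoking a general convergence criterion for $f\circ g$ (Gouv\^ea's Theorem~4.3.3), checking the hypothesis $|c_n s^n|\leq|\psi_\alpha(s)|$ for the coefficients $c_n=(\log_p\alpha)^n/n!$. You instead use the universal property of Tate algebras directly: to define a map out of $\mathbb{Q}_p\langle (T-1)/(\alpha-1)\rangle$ it suffices to send the generator to a power-bounded element, and you verify this by the explicit coefficient estimate $v\big((\log_p\alpha)^n/(n!(\alpha-1))\big)\geq (n-1)\big(v(\alpha-1)-\tfrac{1}{p-1}\big)\geq 0$. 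Your route is slightly cleaner and more in line with standard rigid-geometric practice; the paper's route has the virtue of isolating and naming the potential pitfall (non-analyticity of composition) before disposing of it.
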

\begin{proof} 
First, we will clarify that $\psi_\alpha$ is a well defined map for every $\alpha\in 1+p\mathbb{Z}_p$. For all $s\in \overline{\mathbb{Z}}_p$, we have that $\psi_{\alpha}$ converges when evaluated in $s$ since $|s\cdot \text{log}_p (\alpha)|\leq |\alpha-1|\leq p^{-1}$. Moreover since $\psi_\alpha (s) \in \mathbb{Q}_p (s)$, we have that the map $\psi_{\alpha}$ is Galois equivariant, i.e. $\psi_{\alpha} (\sigma (s))=\sigma (\psi_{\alpha} (s))$ for every $\sigma \in G_{\mathbb{Q}_p}$. Note also that we can find the explicit expression $\psi_\alpha (s)=\alpha^s=(1+(\alpha-1))^s=\sum_{n\geq 0} {{s}\choose{n}} (\alpha-1)^n$ which converges for every $s\in\overline{\mathbb{Z}}_p$. This allow us to define $\psi_\alpha$ on the set of $G_{\mathbb{Q}_p}$-orbits of $\overline{\mathbb{Z}}_p$ which can be identified set-theoretically with $\mathbb{B}^1 (0, 1)^{+}_{\mathbb{Q}_p}$.
Proving that $\psi_\alpha$ is a morphism of $\mathbb{Q}_p$-rigid analytic spaces (affinoid in this case) boils down to show that the induced map on the corresponding affinoid algebras is a morphism. \\
Let $\mathcal{O}_{0,1}$ and $\mathcal{O}_{1, |\alpha-1|}$ denote respectively the $\mathbb{Q}_p$-affinoid algebras attached to the $\mathbb{Q}_p$-affinoid spaces $\mathbb{B}^{1} (0,1)_{\mathbb{Q}_p}^{+}$ and $\mathbb{B}^{1} (1,|\alpha-1|)_{\mathbb{Q}_p}^{+} $. \\
We have that the associated map $\psi_{\alpha}^* : \mathcal{O}_{1,|\alpha-1|} \rightarrow \mathcal{O}_{0, 1}$ is given by $\psi_{\alpha}^* (f)=f \circ \psi_{\alpha}$ for all $f\in \mathcal{O}_{1, |\alpha-1|}\cong \mathbb{Q}_p \langle \frac{T-1}{\alpha-1}\rangle$.\\
In order to show that $\psi_{\alpha}^*$ is a morphism of affinoid algebras, since it is given by the pull-back, it is sufficient to show that it is a well-defined map, in the sense that  $\psi_{\alpha}^* (f)$ belongs to $\mathcal{O}_{0,1}\cong \mathbb{Q}_p \langle T \rangle$; or in other words, it is a converging series. Indeed, the problem is that, in general, composition of $p$-adic analytic functions is not analytic. The convergence property will be deduced by the following convergence criterion (see Thm. 4.3.3 in \cite{Gou97}): 
\begin{theorem}
Let $f(X)=\sum a_n X^n$ and $g(X)=\sum b_n X^n$ be formal power series in $\overline{\mathbb{Q}}_p [[X]]$ with $g(0)=0$, and let $h(X)=f(g(X))$ be their formal composition.\\ 
Suppose that:\\
(i) $g(x)$ converges,\\
(ii) $f(g(x))$ converges (i.e. plugging the number to which $g(x)$ converges into $f(X)$ gives a convergent series)\\
(iii) for every $n\in \mathbb{Z}_{\geq 0}$, we have $|b_n x^n|\leq |g(x)|$ (i.e. no term of the series converging to $g(x)$ is bigger than the sum).
Then $h(x)$ also converges, and $f(g(x))=h(x)$.
 \end{theorem}
Indeed, in our case, it is enough to prove that for all $n\in\mathbb{Z}_{\geq 0}$ we have 
$$|c_n s^n|\leq |\psi_{\alpha} (s)| \;\;\;\;\; \text{ where } \psi_\alpha (s) =\text{exp}_p (s\cdot \text{log}_p (\alpha))=\sum_{n\geq 0} c_n s^n \;\;\;\text{    and   } \;\;c_n:=\frac{(\text{log}_p (\alpha))^n}{n!}.$$
We have that $ |\psi_{\alpha} (s)|=|\text{exp}_p (s\cdot \text{log}_p (\alpha))|=1$ for all $s\in \mathbb{B}^1 (0, 1)^{+}$, hence since $|s|\leq 1$, it is sufficient to prove that $|c_n|\leq 1$.\\
Since $\alpha \in 1+p\mathbb{Z}_p$ and since the $p$-adic logarithm is an isometry we have that $|\text{log}_p (\alpha)|^n = |\alpha -1|^n \leq p^{-n}$. We also recall from classical $p$-adic analysis that $v_p (n!)< \frac{n}{p-1}$ or in other words $|n!| > p^{-\frac{n}{p-1}}$. It follows at once that
$$|c_n|=\frac{|\text{log}_p (\alpha)|^n}{|n!|}<\frac{p^{-n}}{p^{-\frac{n}{p-1}}}<1.$$
Note that the inverse $\psi^{-1}_{\alpha} :  \mathbb{B}^{1} (1,|\alpha-1|)_{\mathbb{Q}_p}^{+} \longrightarrow  \mathbb{B}^{1} (0,1)_{\mathbb{Q}_p}^{+}$ sends $[t]$ to $\Big[\frac{\text{log}_p (t)}{\text{log}_p (\alpha)}\Big]$ and it is, via the same argument, a well-defined morphism of $\mathbb{Q}_p$-affinoid spaces.
This concludes the proof.
\end{proof}
We will make use of the map $\psi_\alpha$ just defined to construct a family of points (i.e. trianguline representations) on $\mathfrak{X}^2$ which will pass through $u_{1-k}$ (i.e. the representation $V(\mu_{a_p}, \mu_{\frac{1}{a_p}} \chi^{1-k})$). \\
For each $s\in\overline{\mathbb{Z}}_p$, we define a multiplicative character of $\mathbb{Q}_p^{\times}$ as follows:

$$
\begin{aligned}
\delta_{k, a_p}^{(s)} : \mathbb{Q}_p^{\times} &\longrightarrow \mathbb{E}(s)^{\times} \\
x&\longmapsto \delta_{k, a_p}^{(s)} (x):= \mu_{\frac{1}{a_p}}(x)\cdot \omega(x)^{1-k}\cdot \psi_{\langle x \rangle }(s),
\end{aligned}
$$
where $\mathbb{E}(s)$ is the finite extension obtained from $\mathbb{E}$ by adding $s$; and where $x=p^{v_p (x)} \omega(x) \langle x \rangle$ is the unique decomposition given by a fixed isomorphism $\mathbb{Q}_p^{\times}\cong p^{\mathbb{Z}}\times \mu(\mathbb{Q}_p) \times 1+p\mathbb{Z}_p$. \\
Note that $\psi_{\langle x \rangle}(s)$ is an element in $\mathbb{Q}_p (s)$ since ${{s}\choose {n}} (\langle x \rangle-1)^n\in\mathbb{Q}_p (s)$ for any $n\in\mathbb{Z}_{\geq 1}$.\\
Finally, we are ready to apply this in the context of rigid analytic spaces, indeed we will define a 1-dimensional $p$-adic family of points in $\mathfrak{X}^2$ through which we will control the ``$p$-adic'' distance between $u_{1-k}$ and $u_{1-k'}$.\\ 
We define $\mathcal{Z}$ to be the $\mathbb{Q}_p$-affinoid spaces given by $\{\mu_{a_p}\}\times\{1/a_p \}\times \{\zeta_p^{k-1}\} \times \mathbb{B}^1 (0,1)^{+}_{\mathbb{Q}_p}$; here $\{\mu_{a_p}\}$ denotes the singleton corresponding to the character $\mu_{a_p}$ on $\mathfrak{X}$, the singleton $\{1/a_p \}$ corresponds to the $\mathbb{E}$-point $1/a_p$ in $\mathbb{G}_m^{\text{rig}}$ and the singleton $\{ \zeta_p^{k-1}  \}$ corresponds to the point $\zeta_{p-1}^{k-1}$ in $\mu(\mathbb{Q}_p)$.\\
By a little abuse of notation, we will still denote a point in $\mathcal{Z}$ by $s$ for the corresponding point $s\in \mathbb{B}^1 (0,1)^{+}_{\mathbb{Q}_p}$. Now, we define the injective map:

$$
\begin{aligned}
\Phi: \mathcal{Z} &\longrightarrow \mathfrak{X}^2\\
s&\longmapsto \Phi(s):=(\mu_{a_p}, \delta_{k, a_p}^{(s)})
\end{aligned}
$$
and note that if $k'\in\mathbb{Z}_{\geq 2}$ satisfies $k'-k \in (p-1)\mathbb{Z}_{\geq 0}$ we have, by construction, that $\Phi(1-k)=u_{k, a_p}$ and $\Phi(1-k')=u_{k', a_p}$ since $\delta_{k, a_p}^{(1-k)}=\mu_{\frac{1}{a_p}} \chi^{1-k}$ and $\delta_{k, a_p}^{(1-k')}=\mu_{\frac{1}{a_p}} \chi^{1-k'}$.\\

\begin{proposition}
The map $\Phi : \mathcal{Z} \rightarrow \mathfrak{X}^2$ is a rigid analytic closed immersion. 
\end{proposition}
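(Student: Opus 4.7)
The plan is to unpack $\mathfrak{X}^2$ via the fixed isomorphism $\mathfrak{X} \cong \mu(\mathbb{Q}_p) \times \mathbb{G}_m^{\mathrm{rig}} \times \mathbb{B}^1(1,1)^-_{\mathbb{Q}_p}$ and read off $\Phi$ componentwise. Concretely, I fix a topological generator $\alpha$ of $1+p\mathbb{Z}_p$ (e.g.\ $\alpha=1+p$), so that continuous characters $\delta_0 : 1+p\mathbb{Z}_p \to \mathbb{L}^\times$ correspond to their value $\delta_0(\alpha)\in \mathbb{B}^1(1,1)^-_{\mathbb{Q}_p}(\mathbb{L})$; this is the last factor of the displayed decomposition of $\mathfrak{X}$. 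Using the decomposition $x = p^{v_p(x)}\omega(x)\langle x\rangle$, I would then evaluate $\delta_{k,a_p}^{(s)}$ on each factor: at $p$ it gives $\mu_{1/a_p}(p)=1/a_p$ (independent of $s$); on $\mu(\mathbb{Q}_p)$ it gives the character $x\mapsto \omega(x)^{1-k}$, which is the fixed point $\zeta_{p-1}^{k-1}$ used to define $\mathcal{Z}$; and on $1+p\mathbb{Z}_p$ it sends $\alpha\mapsto \psi_\alpha(s)$, since $\omega(\alpha)=1$ and $\langle\alpha\rangle=\alpha$.

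Putting this together, under the identification of $\mathfrak{X}^2$ with its six-factor decomposition, the map $\Phi$ factors as
\[
\mathcal{Z} \;=\; \{\mu_{a_p}\}\times \{1/a_p\}\times \{\zeta_{p-1}^{k-1}\}\times \mathbb{B}^1(0,1)^+_{\mathbb{Q}_p}
\;\xrightarrow{\mathrm{id}\times\mathrm{id}\times\mathrm{id}\times \psi_\alpha}\;
\{\mu_{a_p}\}\times \{1/a_p\}\times \{\zeta_{p-1}^{k-1}\}\times \mathbb{B}^1\bigl(1,|\alpha-1|\bigr)^+_{\mathbb{Q}_p}
\;\hookrightarrow\; \mathfrak{X}^2,
\]
where the final arrow is a product of point inclusions on the first three factors and an inclusion of a closed affinoid sub-ball into the open ball $\mathbb{B}^1(1,1)^-_{\mathbb{Q}_p}$ on the last. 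Each of these three kinds of maps is a closed immersion of $\mathbb{Q}_p$-rigid analytic spaces: a $\mathbb{Q}_p$-rational point inclusion $\{*\}\hookrightarrow Y$ is closed by definition, the isomorphism $\psi_\alpha$ of the previous lemma reduces $\Phi$ on the last factor to a closed immersion of an affinoid into the open ball, and a finite product of closed immersions is a closed immersion in the category of rigid analytic spaces. This would give the result.

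The main obstacle, and the only point needing real care, is the translation between the formula defining $\delta_{k,a_p}^{(s)}$ and the chosen coordinates on $\mathfrak{X}$: one has to verify that the three characters given by restriction to $p^{\mathbb{Z}}$, to $\mu(\mathbb{Q}_p)$ and to $1+p\mathbb{Z}_p$ are precisely the $\mathbb{G}_m^{\mathrm{rig}}$, $\mu(\mathbb{Q}_p)$ and $\mathbb{B}^1(1,1)^-_{\mathbb{Q}_p}$ coordinates in Colmez's parametrization, and that the dependence on $s$ is concentrated in the last one, purely through $\psi_\alpha$. Once this identification is made rigorous, the closed-immersion property is essentially a formal consequence of the previous lemma together with the fact that $|\alpha-1|\le p^{-1}<1$, so the image of $\psi_\alpha$ is a closed affinoid ball strictly inside the open unit ball.
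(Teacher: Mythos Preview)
Your proposal is correct and follows essentially the same approach as the paper: both decompose $\mathfrak{X}^2$ via the fixed product description, identify $\Phi$ componentwise as constant on the first factors and equal to $\psi_{1+p}$ on the last, and then invoke the previous lemma on $\psi_\alpha$ to conclude. The paper carries out the coordinate identification you flag as the ``main obstacle'' by explicitly evaluating $\delta_{k,a_p}^{(s)}$ at $\zeta_{p-1}$, $p$, and $1+p$, and then phrases the closed-immersion conclusion in the same way you do (image is an affinoid subdomain).
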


\begin{proof}
In order to see that $\Phi$ is a morphism of $\mathbb{Q}_p$-rigid analytic spaces it is sufficient to observe that, decomposing $\mathfrak{X}^2$ as $\mathfrak{X}\times \mu(\mathbb{Q}_p)\times \mathbb{G}^{\text{rig}}_m \times \mathbb{B}^1 (1, 1)_{\mathbb{Q}_p}^{-}$, the map $\Phi$ is a product of constant morphisms and $\psi_{1+p}$:

$$
\begin{aligned}
\Phi: \mathcal{Z} &    \llongrightarrow \mathfrak{X}^2                                   =\mathfrak{X}\times \mu(\mathbb{Q}_p)\times \mathbb{G}_m^{\text{rig}} \times \mathbb{B}^1 (1,1)_{\mathbb{Q}_p}^{-} \\
s                         &\longmapsto (\mu_{a_p}, \delta_{k, a_p}^{(s)})                =(\mu_{a_p}, \delta_{k, a_p}^{(s)} (\zeta_{p-1}), \delta_{k, a_p}^{(s)} (p), \delta_{k, a_p}^{(s)} (1+p))\\
                        &                                                                                   \qquad\qquad\qquad\;\;\;=(\mu_{a_p}, [\zeta_{p-1}^{1-k}], [a_p^{-1}], \psi_{1+p}(s)).&\;
\end{aligned}
$$
The universal property of fiber products (in the category of rigid analytic spaces) allows us to reduce the claim to prove that the composition of $\Phi$ with the projection on the last factor of $\mathfrak{X}^2$, which is exactly $\psi_{1+p}$, belongs to $\text{Mor}(\mathcal{Z}, \mathbb{B}^1 (1, 1)_{\mathbb{Q}_p}^{-})$. This follows at once from Lemma \ref{morphism}. 
Moreover, the image is an affinoid subdomain of $\mathfrak{X}^2$ making $\Phi$ a closed immersion.
\end{proof}
Now, the heart of the proof is that the representations attached to points of $\mathfrak{X}^2$ vary locally in a continuous way. In precise terms, this is the following result of Colmez and Chenevier (see Prop. 5.2 in \cite{Col08} and its generalization Prop. 3.9 in \cite{Che13}, see also Prop. 3.2 in \cite{Ber12}):

\begin{theorem}
\label{continuitya}
Let $\delta_1, \delta_2 :\mathbb{Q}_p^{\times} \rightarrow \mathbb{E}^{\times}$ be two characters such that $\delta_1 \delta_2^{-1} \not=x^{-i}$ for some $i\geq 0$, where $x$ denotes the identity character of $\mathbb{Q}_p^{\times}$. Let $u=(\delta_1, \delta_2)$ be the corresponding point in $\mathfrak{X}^2$. Then there exists a open neighborhood $\mathfrak{U}$ of $u$ and a finite, free $\mathcal{O}_\mathfrak{U}$-module $\mathbb{V}$ of rank 2 with an action of $G_{\mathbb{Q}_p}:=\text{Gal}(\bar{\mathbb{Q}}_p, \mathbb{Q}_p)$ such that $\mathbb{V}(\tilde{u})=V(\delta_1 (\tilde{u}), \delta_2 (\tilde{u}))$ for every $\tilde{u}\in\mathfrak{U}$.
\end{theorem}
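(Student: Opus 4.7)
The plan is to realize the desired family $\mathbb{V}$ in two moves: first construct a family of trianguline $(\varphi,\Gamma)$-modules over the relative Robba ring on a small affinoid neighborhood $\mathfrak{U}$ of $u$, and then descend that family to a family of Galois representations via the relative version of Fontaine's equivalence, using Kedlaya's slope filtration theorem to ensure étaleness persists on $\mathfrak{U}$.

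Concretely, the universal characters $\widetilde{\delta}_1,\widetilde{\delta}_2 : \mathbb{Q}_p^\times \rightarrow \mathcal{O}_\mathfrak{U}^\times$ exist after shrinking, so one has universal rank-one $(\varphi,\Gamma)$-modules $\mathcal{R}_{\mathcal{O}_\mathfrak{U}}(\widetilde{\delta}_1)$ and $\mathcal{R}_{\mathcal{O}_\mathfrak{U}}(\widetilde{\delta}_2)$ over the relative Robba ring. I would then build a rank-two family $\mathbb{D}$ by extending the triangulation at $u$:
$$0 \longrightarrow \mathcal{R}_{\mathcal{O}_\mathfrak{U}}(\widetilde{\delta}_1) \longrightarrow \mathbb{D} \longrightarrow \mathcal{R}_{\mathcal{O}_\mathfrak{U}}(\widetilde{\delta}_2) \longrightarrow 0,$$
whose specialization at $\tilde u \in \mathfrak{U}$ should recover the trianguline $(\varphi,\Gamma)$-module attached to $V(\delta_1(\tilde u),\delta_2(\tilde u))$. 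To produce $\mathbb{D}$ I would use Colmez's cohomological computation of $\mathrm{Ext}^1(\mathcal{R}_\mathbb{E}(\delta_2),\mathcal{R}_\mathbb{E}(\delta_1))$ in terms of $(\varphi,\Gamma)$-cohomology of $\mathcal{R}_\mathbb{E}(\delta_1\delta_2^{-1})$. The hypothesis $\delta_1\delta_2^{-1}\neq x^{-i}$ for every $i\geq 0$ is exactly what rules out the exceptional characters on which $H^1$ jumps, so the corresponding cohomology groups are one-dimensional and, after shrinking $\mathfrak{U}$, commute with base change; the class giving the triangulation at $u$ therefore lifts canonically to a free rank-one $\mathcal{O}_\mathfrak{U}$-module of extension classes, and any generator of that module produces $\mathbb{D}$.

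For the descent to Galois representations, the key input is Kedlaya's slope filtration theorem in the relative setting (Kedlaya--Liu, Kedlaya--Pottharst--Xiao): the étale (pure slope zero) locus of a family of $(\varphi,\Gamma)$-modules over the Robba ring is open in the parameter space. Since $\mathbb{D}_u$ is étale (it is the $(\varphi,\Gamma)$-module attached to a genuine Galois representation $V(\delta_1,\delta_2)$), a further shrinking of $\mathfrak{U}$ ensures $\mathbb{D}$ itself is étale. The relative version of the Cherbonnier--Colmez/Berger--Colmez equivalence then produces a locally free $\mathcal{O}_\mathfrak{U}$-module $\mathbb{V}$ of rank two with a continuous $G_{\mathbb{Q}_p}$-action whose fiber at $\tilde u$ recovers $V(\delta_1(\tilde u),\delta_2(\tilde u))$, exactly as desired.

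The two obstacles I expect are, first, a careful base-change analysis for $(\varphi,\Gamma)$-cohomology of rank-one modules to show that the extension group stays one-dimensional and free in families: this is where the hypothesis on $\delta_1\delta_2^{-1}$ is essential, since at the excluded characters the Ext jumps to dimension two and one has to keep track of the $\mathfrak{L}$-invariant. Second, and more serious, is the openness of the étale locus, which is the heart of Kedlaya's slope theory in families; I would treat this as a black box, as is done in \cite{Col08} and in Chenevier's generalization \cite{Che13}. Once both tools are in place, the family $\mathbb{V}$ is essentially forced by the universal extension and the interpolation property is tautological.
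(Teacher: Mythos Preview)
The paper does not supply its own proof of this statement: it is quoted verbatim as a result of Colmez (Prop.~5.2 in \cite{Col08}) and its generalization by Chenevier (Prop.~3.9 in \cite{Che13}), with a further reference to Prop.~3.2 in \cite{Ber12}. So there is no in-paper argument to compare against; the theorem functions here as a black box feeding into the proof of Theorem~\ref{weight}.

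That said, your sketch is a faithful outline of how Colmez and Chenevier actually establish the result. The two-step strategy --- build a universal extension of rank-one $(\varphi,\Gamma)$-modules over the relative Robba ring using the one-dimensionality of the relevant $H^1$, then invoke openness of the \'etale locus (Kedlaya, Kedlaya--Liu) to descend to a genuine family of Galois representations --- is exactly the architecture of those proofs. Your identification of the hypothesis $\delta_1\delta_2^{-1}\neq x^{-i}$ as the condition preventing a jump in $\dim H^1$ is correct and is the reason no $\mathfrak{L}$-invariant bookkeeping is needed on $\mathfrak{U}$. One small point: be careful with the direction of the extension. In the paper's conventions the triangulation is $0\to\mathcal{R}_\mathbb{E}(\delta_1)\to D\to\mathcal{R}_\mathbb{E}(\delta_2)\to 0$, so the relevant group is $\mathrm{Ext}^1(\mathcal{R}_\mathbb{E}(\delta_2),\mathcal{R}_\mathbb{E}(\delta_1))\cong H^1(\mathcal{R}_\mathbb{E}(\delta_1\delta_2^{-1}))$, which is what you wrote; just make sure this matches whatever convention you fix when you flesh out the base-change argument. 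Beyond that, the two ``obstacles'' you flag are precisely the substantive content of the cited references, and treating the \'etale-locus openness as a black box is consistent with how the present paper uses the theorem.
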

As we are interested in points inside $\mathfrak{U}\subset \mathfrak{X}^2$ which is an open neighborhood of $u_{1-k}$, we will first prove that if $k'$ is sufficiently $p$-adically close to $k$ (close as points in the weight space) then also $\Phi(1-k')=u_{1-k'}$ will lie in $\mathfrak{U}$.\\
Without loss of generality, as $\mathfrak{U}$ is an admissible open we can assume (up to restriction) that $(\mathfrak{U}, \mathcal{O}_{\mathfrak{U}})$ is an affinoid space. Since $\Phi$ is a morphism of rigid analytic spaces, it is in particular continuous for the $G$-topology, hence $\Phi^{-1} (\mathfrak{U})$ is an admissible open of the affinoid space $\mathcal{Z}$.\\
In particular, we can deduce that there exists a minimal $r\in\frac{1}{e} (\mathbb{Z}_{\geq1})$ such that the affinoid subdomain 
$\mathcal{Z}_r :=\{\mu_{a_p}\}\times \{[a^{-1}_p]\}\times\{[\zeta_p^{k-1}]\}\times\mathbb{B}^{1} (1-k, p^{-r})^{+}_{\mathbb{Q}_p}$ of $\mathcal{Z}$ is contained in $\Phi^{-1} (\mathfrak{U})$. As usual, we identify the algebra of funcions $\mathcal{O}_{\mathcal{Z}_r} $ of the $\mathbb{Q}_p$-affinoid space $\mathcal{Z}_r$ with $\mathbb{E}\otimes \mathbb{Q}_p \langle \frac{T-(1-k)}{p^r}\rangle$. By restricting the morphism $\Phi$ to $\mathcal{Z}_r$, we get the morphism of $\mathbb{Q}_p$-affinoid spaces:
$$\Phi: \mathcal{Z}_r \rightarrow \mathfrak{U} $$
and as usual, we denote its associated morphism of $\mathbb{Q}_p$-affinoid algebras by $\Phi^{*} : \mathcal{O}_{\mathfrak{U}} \rightarrow \mathcal{O}_{\mathcal{Z}_r}$. \\
Now, observe that if we fix a $\bar{u}\in \mathfrak{U}$ with field of definition $\mathbb{L}_{\bar{u}}$, it induces a $\mathbb{Q}_p$-Banach spaces morphism given by the evaluation 
$\text{ev}_{\bar{u}} : \mathcal{O}_\mathfrak{U} \rightarrow \mathbb{L}_{\bar{u}}$. Consider now the finite, free $\mathcal{O}_{\mathfrak{U}}$-module $\mathbb{V}$ of rank 2 considered by Colmez. The ring homomorphism $\text{ev}_{\bar{u}}$ induces on $\mathbb{L}_{\bar{u}}$ a structure of $\mathcal{O}_\mathfrak{U}$-module, hence we define $\mathbb{V}(\bar{u}):=\mathbb{V}\otimes_{\mathcal{O}_\mathfrak{U}} \mathbb{L}_{\bar{u}}$.
By Chenevier's and Colmez's Theorem \ref{continuitya}, we have that $\mathbb{V}(\bar{u})=V(\delta_1 (\bar{u}), \delta_2 (\bar{u}))$ where $\bar{u}:=(\delta_1 (\bar{u}), \delta_2 (\bar{u}))\in\mathfrak{X}^2$.
In particular, we note that when $\bar{u}=u_{1-k}$, then $\mathbb{L}_{u_{1-k}}=\mathbb{E}$ and $\mathbb{V}(u_{1-k})=V(\mu_{a_p}, \delta_{k, a_p})$. Clearly the analogue statement holds for $k' \in \mathcal{Z}_r$ such that $k'-k\in(p-1)\mathbb{Z}_{\geq 0}$.\\
The idea is now to pull back the analytic family of representations given by Colmez in order to create a new analytic family parametrized by points in $\mathcal{Z}_r$ which has the advantage that will depend only on one parameter. The notion of  analytic family of representations parametrized  by an affinoid space is used in the sense of Berger and Colmez (see \cite{BC08}) but one could also have approached the problem from the point of view of analytic family of $(\varphi, \Gamma)$-modules over variations of the Robba rings in the sense of Bella\"iche (see \cite{Bel12}) considering the existence of a fully faithful functor which connects the two categories (see sec. 3 in \cite{Bel12}).\\
Since $\Phi: \mathcal{Z}_r\rightarrow \mathfrak{U}$ is an injective morphism of rigid analytic spaces, the induced map $\Phi^{*} : \mathcal{O}_\mathfrak{U}\rightarrow \mathcal{O}_{\mathcal{Z}_r}$ is given by the pull-back, i.e. $\Phi^{*} (f)=f\circ \Phi$ for all $f\in\mathcal{O}_\mathfrak{U}$. The ring homomorphism $\Phi^{*}$ gives to $\mathcal{O}_{\mathcal{Z}_r}$ the structure of $ \mathcal{O}_\mathfrak{U}$-module and so we can define $\mathbb{V}_{\mathbb{B}^{1,+}}:= \mathbb{V}\otimes_{\mathcal{O}_{\mathfrak{U}}} \mathcal{O}_{\mathcal{Z}_r}$, it is a finite, free $\mathcal{O}_{\mathcal{Z}_r}$-module of rank 2 with a continuous $G_{\mathbb{Q}_p}$-action (given by the action of the Galois group on $\mathbb{V}$). In particular, for all $s\in \mathcal{Z}_r$ we have by definition that $\mathbb{V}_{\mathcal{Z}_r}(s)=\mathbb{V}(\Phi(s))$.\\
Now, in order to deal wth reductions we first need to identify an integral family of lattices. First, we recall that there is a notion of integral model for affinoid algebras.
We define $\mathcal{O}_{\mathfrak{U}}^{0}:=\{g\in \mathcal{O}_{\mathfrak{U}}: \; |g|_{\text{sup}}\leq 1\}$. It is a model for $\mathcal{O}_{\mathfrak{U}}$, i.e. it is a $\mathbb{Z}_p$-subalgebra of $\mathcal{O}_{\mathfrak{U}}$
topologically of finite type (i.e. it is a quotient of $Z_p \langle x_1, \dots, x_n\rangle $ for some integer $n\geq 1$) and such that $\mathcal{O}_{\mathfrak{U}}^{0} [\frac{1}{p}]= \mathcal{O}_{\mathfrak{U}}$.\\
The existence inside $\mathbb{V}$ of an integral family follows from the following lemma:
\begin{lemma}
Let $G$ be a profinite group, let $A$ be a $\mathbb{Q}_p$-affinoid algebra and let $V$ be a finite, free $A$-module endowed with an $A$-linear, continuous action of $G$. Let $A^0$ be a model for $A$.
Then there exists a finite, free $A^0$-module $V_0$ inside $V$ such that $V_0\otimes_{A^0} A =V$ and which is stable under the continuous action of $G$. 
\end{lemma}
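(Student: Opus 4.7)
The plan is a standard averaging argument over the compact group $G$, followed by a freeing step. First, I fix an $A$-basis $e_1,\ldots,e_d$ of $V$ and take the initial free $A^0$-lattice $L_0 := \bigoplus_{i=1}^d A^0 e_i$, which $A$-spans $V$ but is not $G$-stable a priori. Writing the action in coordinates as $g \cdot e_j = \sum_i a_{ij}(g) e_i$, the continuity of the $G$-action on $V$ together with the choice of basis means each $a_{ij} \colon G \to A$ is a continuous map.

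Next, I exploit that $G$ is compact (being profinite) to bound the orbit of $L_0$. Any model $A^0$ of $A$ is bounded in the Banach topology on $A$, so compactness of each image $a_{ij}(G)$ in $A$ yields an integer $N \geq 0$ with $a_{ij}(G) \subset p^{-N} A^0$ for all $i,j$, and therefore $gL_0 \subset p^{-N} L_0$ for every $g \in G$. The $A^0$-submodule
\[ V_0' := \sum_{g \in G} gL_0 \ \subset \ p^{-N} L_0 \]
is $G$-stable by construction and contains $L_0$, so $V_0' \otimes_{A^0} A = V$. Since $A^0$ is topologically of finite type over $\mathbb{Z}_p$ it is Noetherian, and hence $V_0'$, being a submodule of the finitely generated $A^0$-module $p^{-N} L_0$, is itself finitely generated; it is also $p$-torsion free as a submodule of the free $A$-module $V$.

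The main obstacle is to upgrade $V_0'$ from a finitely generated $G$-stable lattice to an honest \emph{free} $A^0$-module of rank $d$. My plan here is to shrink the affinoid neighborhood $\mathfrak{U}$ of $u_{1-k}$ (which is legitimate since Theorem \ref{continuitya} allows this) so that the corresponding model $A^0$ becomes regular with trivial Picard group in a neighborhood of $u_{1-k}$, and then replace $V_0'$ by its reflexive hull $(V_0')^{\vee\vee} := \text{Hom}_{A^0}(\text{Hom}_{A^0}(V_0', A^0), A^0)$. This operation preserves both the continuous $G$-action (by functoriality of $\text{Hom}$) and the generic fibre $V_0' \otimes_{A^0} A = V$, and under the regularity and Picard-triviality assumptions it produces a finitely generated projective, hence free, $A^0$-module of rank $d$. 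This yields the desired integral family $V_0$ inside $V$.
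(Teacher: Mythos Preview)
Your route to a $G$-stable lattice differs from the paper's. You bound the full orbit of $L_0$ uniformly inside $p^{-N}L_0$ and then take $V_0' = \sum_{g\in G} gL_0$. The paper instead observes that, since $A^0$ is \emph{open} in $A$, the free lattice $W=L_0$ is open in $V$; continuity of the action then forces the stabiliser $H_W \subset G$ to be an open subgroup, hence of finite index by profiniteness, and one sets $V_0 = \sum_i h_i W$ over a finite system of coset representatives $\{h_i\}$. Both arguments arrive at a finitely generated $G$-stable $A^0$-submodule spanning $V$ over $A$; the paper's finite-index trick is a little sharper in that it exhibits $V_0$ as a \emph{finite} sum of free rank-$d$ submodules rather than an a priori infinite one.

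The genuine gap in your proposal is the freeness step. First, you abandon the generality of the lemma and invoke the particular neighbourhood $\mathfrak{U}$ from the application, which already changes what is being proved. More seriously, the assertion that the reflexive hull $(V_0')^{\vee\vee}$ is projective over a regular $A^0$ with trivial Picard group is false once $\dim A^0 \geq 3$: over a regular local ring a reflexive module has depth $\geq 2$, so Auslander--Buchsbaum gives only $\mathrm{pd}\,M \leq \dim A^0 - 2$, which says nothing about projectivity in higher dimension. Shrinking $\mathfrak{U}\subset\mathfrak{X}^2$ to a small polydisk still leaves $\mathcal{O}_{\mathfrak{U}}^0$ a restricted power-series ring over $\mathcal{O}_\mathbb{E}$ in four variables, hence regular of Krull dimension $5$, well outside the range where your double-dual trick forces freeness. (In fairness, you have correctly spotted that freeness is the delicate point: the paper's own proof simply asserts that the finite sum $\sum_i h_i W$ is free without further comment.)
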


\begin{proof}
Let $W$ be a finite and free $A^0$-module such that $W\otimes_{A^0} A=V$. Since $A^0$ is open inside $A$, and since $V$ is a topological finite direct sum of copies of $A$ we have that $W$ is open inside $V$.
The action of $G$ can be represented by a continuous map $G \times W \rightarrow V$. Since $W$ is open inside $V$, then the subgroup $H_W \subset G$ stabilizing $W$ is an open subgroup of $G$. Since $G$ is profinite, we have 
that $H_W$ is of finite index. Let $\{h_i\}_i$ be a finite set of representatives for the left $H_W$-cosets in $G$. Hence, defining $V_0$ as $\sum_i h_i W$ we have that $V_0$ is a $G$-stable, finite and free $A^0$-module such that $V_0\otimes_{A^0} A=V$, this completes the proof.
\end{proof}
Applying the above result when $G=\text{Gal}(\overline{\mathbb{Q}}_p / \mathbb{Q}_p)$, $A=\mathcal{O}_\mathfrak{U}$,  $A^0=\mathcal{O}^0_\mathfrak{U}$ and $V=\mathbb{V}$ allow us to deduce that there exists $\mathbb{T}$ inside $\mathbb{V}$ finite, free $\mathcal{O}_{\mathfrak{U}}^{0}$-module of rank 2, which is $G_{\mathbb{Q}_p}$-stable and such that $\mathbb{T}\otimes \mathcal{O}_\mathfrak{U}=\mathbb{V}$ as 
$G_{\mathbb{Q}_p}$-modules.Then there exists $\mathbb{T}$ inside $\mathbb{V}$ finite, free $\mathcal{O}_{\mathfrak{U}}^{0}$-module of rank 2, which is $G_{\mathbb{Q}_p}$-stable. \\After defining $\mathbb{V}_{\mathcal{Z}_r}$ as $\mathbb{V}_{\mathcal{O}_\mathfrak{U}}\otimes_{\mathcal{O}_\mathfrak{U}} \mathcal{O}_{\mathcal{Z}_r}$, take the model $\mathcal{O}^{0}_{\mathcal{Z}_r}:=\{g\in \mathcal{O}_{\mathcal{Z}_r}: |g|_{\text{sup}}\leq 1 \}$. Since every morphism of affinoid algebras is in particular a contraction, we have that using the restriction, $\Phi: \mathcal{O}^{0}_{\mathfrak{U}} \twoheadrightarrow \mathcal{O}^{0}_{\mathcal{Z}_r}$. This allows to define $\mathbb{T}_{\mathcal{Z}_r}:=\mathbb{T}\otimes_{ \mathcal{O}^{0}_{\mathfrak{U}} } \mathcal{O}^{0}_{\mathcal{Z}_r}$.\\
The properties of $\mathbb{T}_{\mathbb{B}^{1,+}_r}$ are summarized in the following:

\begin{lemma}
The $\mathcal{O}^{0}_{\mathcal{Z}_r}$-module $\mathbb{T}_{\mathcal{Z}_r}$ is finite, free of rank 2 submodule of $\mathbb{V}_{\mathcal{Z}_r}$. It has a natural action of the Galois group $G_{\mathbb{Q}_p}$; in particular we have that:
$$\mathbb{T}_{\mathcal{Z}_r} \otimes_{\mathcal{O}^{0}_{\mathcal{Z}_r}} \mathcal{O}_{\mathcal{Z}_r}\cong \mathbb{V}_{\mathcal{Z}_r}$$
is an isomorphism of $G_{\mathbb{Q}_p}$-modules. 
\end{lemma}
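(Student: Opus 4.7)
The plan is to verify each of the claimed properties by pure base‑change manipulations, using that $\mathcal{O}_{\mathcal{Z}_r}=\mathcal{O}^{0}_{\mathcal{Z}_r}[\tfrac{1}{p}]$ (and analogously for $\mathfrak{U}$) and that the Galois action on $\mathbb{V}$ is $\mathcal{O}_{\mathfrak{U}}$-linear.

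First I would verify finiteness and freeness. Since $\mathbb{T}$ is free of rank $2$ over $\mathcal{O}^{0}_{\mathfrak{U}}$, any basis of $\mathbb{T}$ tensored with $\mathcal{O}^{0}_{\mathcal{Z}_r}$ gives a basis of $\mathbb{T}_{\mathcal{Z}_r}:=\mathbb{T}\otimes_{\mathcal{O}^{0}_{\mathfrak{U}}}\mathcal{O}^{0}_{\mathcal{Z}_r}$, so $\mathbb{T}_{\mathcal{Z}_r}$ is free of rank $2$ over $\mathcal{O}^{0}_{\mathcal{Z}_r}$. The $G_{\mathbb{Q}_p}$-action is obtained by functoriality, acting only on the first tensor factor: since the action on $\mathbb{T}$ is $\mathcal{O}^{0}_{\mathfrak{U}}$-linear (the Galois group acts trivially on coefficients), and the morphism $\Phi^{*}:\mathcal{O}^{0}_{\mathfrak{U}}\to \mathcal{O}^{0}_{\mathcal{Z}_r}$ is $\mathbb{Z}_p$-linear, the action descends to an $\mathcal{O}^{0}_{\mathcal{Z}_r}$-linear action of $G_{\mathbb{Q}_p}$ on $\mathbb{T}_{\mathcal{Z}_r}$. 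Continuity is preserved because restriction to an affinoid subdomain is continuous.

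Next I would establish the isomorphism $\mathbb{T}_{\mathcal{Z}_r}\otimes_{\mathcal{O}^{0}_{\mathcal{Z}_r}}\mathcal{O}_{\mathcal{Z}_r}\cong \mathbb{V}_{\mathcal{Z}_r}$ by the following chain of canonical identifications, each of which is $G_{\mathbb{Q}_p}$-equivariant as the Galois action touches only the first factor:
\begin{align*}
\mathbb{T}_{\mathcal{Z}_r}\otimes_{\mathcal{O}^{0}_{\mathcal{Z}_r}}\mathcal{O}_{\mathcal{Z}_r}
&\cong \mathbb{T}\otimes_{\mathcal{O}^{0}_{\mathfrak{U}}}\mathcal{O}_{\mathcal{Z}_r}\\
&\cong \mathbb{T}\otimes_{\mathcal{O}^{0}_{\mathfrak{U}}}\mathcal{O}_{\mathfrak{U}}\otimes_{\mathcal{O}_{\mathfrak{U}}}\mathcal{O}_{\mathcal{Z}_r}\\
&\cong \mathbb{V}\otimes_{\mathcal{O}_{\mathfrak{U}}}\mathcal{O}_{\mathcal{Z}_r}=\mathbb{V}_{\mathcal{Z}_r},
\end{align*}
where in the third step I use that $\mathbb{T}\otimes_{\mathcal{O}^{0}_{\mathfrak{U}}}\mathcal{O}_{\mathfrak{U}}=\mathbb{T}[\tfrac{1}{p}]=\mathbb{V}$ by the previous lemma.

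Finally, the inclusion $\mathbb{T}_{\mathcal{Z}_r}\hookrightarrow \mathbb{V}_{\mathcal{Z}_r}$: since $\mathbb{T}_{\mathcal{Z}_r}$ is free of rank $2$ over $\mathcal{O}^{0}_{\mathcal{Z}_r}$ and the inclusion $\mathcal{O}^{0}_{\mathcal{Z}_r}\hookrightarrow \mathcal{O}_{\mathcal{Z}_r}$ is $p$-torsion free (indeed $\mathcal{O}_{\mathcal{Z}_r}$ is obtained from $\mathcal{O}^{0}_{\mathcal{Z}_r}$ by inverting $p$), the localization map $\mathbb{T}_{\mathcal{Z}_r}\to \mathbb{T}_{\mathcal{Z}_r}[\tfrac{1}{p}]\cong \mathbb{V}_{\mathcal{Z}_r}$ is injective. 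No genuine obstacle is expected here: the statement is a compatibility lemma and each step is a routine application of associativity of tensor products and of the fact that inverting $p$ commutes with base change. The only point deserving attention is to check that the restriction map on global sections sends $\mathcal{O}^{0}_{\mathfrak{U}}$ into $\mathcal{O}^{0}_{\mathcal{Z}_r}$, which is immediate because morphisms of affinoid algebras are contractions for the supremum norm, and this is what was already recorded above.
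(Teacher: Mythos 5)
Your proof is correct and follows essentially the same route as the paper's: both establish the isomorphism by associativity of tensor products, reducing to the common intermediate $\mathbb{T}\otimes_{\mathcal{O}^{0}_{\mathfrak{U}}}\mathcal{O}_{\mathcal{Z}_r}$ and then using the definitional isomorphism $\mathbb{T}\otimes_{\mathcal{O}^{0}_{\mathfrak{U}}}\mathcal{O}_{\mathfrak{U}}\cong\mathbb{V}$. You merely run the chain in the opposite direction and supply a couple of extra, but standard, verifications (that the Galois action transports through base change because it is linear over the coefficients, and that the map $\mathbb{T}_{\mathcal{Z}_r}\to\mathbb{V}_{\mathcal{Z}_r}$ is injective because $\mathcal{O}^{0}_{\mathcal{Z}_r}$ has no $p$-torsion and $\mathbb{T}_{\mathcal{Z}_r}$ is free) which the paper leaves implicit.
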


\begin{proof}
The only claim that is not clear is the isomorphism of $G_{{\mathbb{Q}}_p}$-modules. By definition we have an isomorphism $\mathbb{T}\otimes_{\mathcal{O}^0_\mathfrak{U}} \mathcal{O}_\mathfrak{U}\cong \mathbb{V}$ of $G_{{\mathbb{Q}}_p}$-modules, hence tensorizing by $\mathcal{O}_{\mathcal{Z}_r}$ (considered as $\mathcal{O}_\mathfrak{U}$-module via $\Phi^{*}$) we get the isomorphism $(\mathbb{T}\otimes_{\mathcal{O}^0_\mathfrak{U}} \mathcal{O}_\mathfrak{U})\otimes_{\mathcal{O}_\mathfrak{U}} \mathcal{O}_{\mathcal{Z}_r} \cong \mathbb{V}_{\mathcal{Z}_r}$ of $G_{\mathbb{Q}_p}$-modules.\\
We have the following chain of isomorphism of $G_{\mathbb{Q}_p}$-modules:
$$
\begin{aligned}
&(\mathbb{T}\otimes_{\mathcal{O}^0_\mathfrak{U}} \mathcal{O}_\mathfrak{U})\otimes_{\mathcal{O}_\mathfrak{U}} \mathcal{O}_{\mathcal{Z}_r}\cong
\mathbb{T}\otimes_{\mathcal{O}^0_\mathfrak{U}} (\mathcal{O}_\mathfrak{U}\otimes_{\mathcal{O}_\mathfrak{U}} \mathcal{O}_{\mathcal{Z}_r})\cong 
\mathbb{T}\otimes_{\mathcal{O}^0_\mathfrak{U}}  \mathcal{O}_{\mathcal{Z}_r}\cong\\
&\mathbb{T}\otimes_{\mathcal{O}^0_\mathfrak{U}}  (\mathcal{O}^0_{\mathcal{Z}_r}\otimes_{\mathcal{O}^0_{\mathcal{Z}_r}} \mathcal{O}_{\mathcal{Z}_r})\cong
(\mathbb{T}\otimes_{\mathcal{O}^0_\mathfrak{U}}  \mathcal{O}^0_{\mathcal{Z}_r} )\otimes_{\mathcal{O}^0_{\mathcal{Z}_r}} \mathcal{O}_{\mathcal{Z}_r}\cong 
\mathbb{T}_{\mathcal{Z}_r}\otimes_{\mathcal{O}^0_{\mathcal{Z}_r}} \mathcal{O}_{\mathcal{Z}_r},
\end{aligned}
$$
where the isomorphisms are given by the associative property of tensor product for general modules (see Prop. 3.8, chap. 3 in \cite{Bou89}).
Hence, the claim follows. 
\end{proof}
Note that now for all $s\in\mathcal{Z}_r$, we have that $\mathbb{T}_{\mathcal{Z}_r} (s) \otimes \mathbb{L}_{u_s}=\mathbb{V}_{\mathcal{Z}_r}(s)$ and $\mathbb{T}_{\mathcal{Z}_r} (1-k)$ and $\mathbb{T}_{\mathcal{Z}_r} (1-k')$ are two $G_{\mathbb{Q}_p}$-stable $\mathcal{O}_\mathbb{E}$-lattices inside respectively $\mathbb{V}_{\mathcal{Z}_r} (1-k)$ and $\mathbb{V}_{\mathcal{Z}_r} (1-k')$.\\
Now, we are going to use the language of deformations theory to control the ``$p$-adic distance'' between $\mathbb{T}_{\mathcal{Z}_r} (1-k)$ and $\mathbb{T}_{\mathcal{Z}_r} (1-k')$.\\
Let $\rho : G_{\mathbb{Q}_p} \rightarrow \text{Gl}(\mathbb{T}_{\mathcal{Z}_r})\cong \text{Gl}_2 (\mathcal{O}_{\mathcal{Z}_r}^{0})$ the Galois representations attached to $\mathbb{T}_{\mathcal{Z}_r}$. 
For every $s\in\mathcal{Z}_r$, we define $\rho_s : G_{\mathbb{Q}_p}\rightarrow \text{Gl}(\mathbb{T}_{\mathcal{Z}_r} (s))$; as we have seen before, this representation correspond to a $G_{\mathbb{Q}_p}$-stable lattice inside the trianguline representation $V(\delta_1 (s), \delta_2 (s))$. In particular, we have that $\rho_{1-k}$ and $\rho_{1-k'}$ correspond to $G_{\mathbb{Q}_p}$-stable lattices inside respectively the representations $V(\mu_{a_p}, \delta_{k, a_p})$ and $V(\mu_{a_p}, \delta_{k', a_p})$. Moreover, as we have already seen, the representations $\rho_s$ can be obtained from $\rho$ via composition by the evaluation map at $s$, i.e. we have $\rho_s ={\text{ev}^{*}_{s}} \circ \rho$ where ${\text{ev}^{*}_{s}}$ is just notation for the induced map on $\text{Gl}_2$ from $\text{ev}_s$.\\
Now, for a fixed $m\in\mathbb{Z}_{\geq 1}$, we can consider the diagram:

$$
\begin{tikzcd}
       & &&\arrow[ld,"{{ev}_{1-k}^*}"']\text{Gl}(\mathbb{T}_{\mathcal{Z}_r} )\arrow[rd, "{{ev}_{1-k'}^*}"] &\\
        G_{\mathbb{Q}_p}   \arrow["{\rho}",rrru, bend left=15] \arrow["{\rho_{1-k}}", rr]& & \text{Gl}(\mathbb{T}_{\mathcal{Z}_r} (1-k))  \arrow[rd, "{\text{Pr}_m}"] & &  \text{Gl}(\mathbb{T}_{\mathcal{Z}_r} (1-k')) \arrow[ld, "{\text{Pr}_m}"']\\
        & & & \text{Gl}_2 (\mathcal{O}_\mathbb{E}/(p^m) & \\
        & &  & & 
    \end{tikzcd}
    $$
where $\text{Pr}_m$ denotes the induced homomorphism on $\text{Gl}_2$ from the natural projection $\mathcal{O}_\mathbb{E}\twoheadrightarrow \mathcal{O}_\mathbb{E} /(p^m)$.\\
It is clear that the above diamond in the diagram commutes if and only if for all $f\in\mathcal{O}^{0}_{\mathcal{Z}_r}$ we have $f(1-k)-f(1-k') \in (p^m)$ inside $\mathcal{O}_\mathbb{E}$.\\
Hence, we reduced the claim to prove that there exists a positive integer $n=n(k, a_p, m)\geq 1$ such that if $k'-k \in p^n (p-1)\mathbb{Z}_{\geq 0}$ then $|\text{ev}_{1-k'} (f)-\text{ev}_{1-k} (f) |=|f(1-k')-f(1-k)|\leq p^{-m}$.\\
This follows from the general following lemma in the theory of affinoid algebras:

\begin{lemma}
Let $r\geq 0$ be an integer. For every $g\in \overline{\mathbb{Q}}_p \langle \frac{T}{p^r}\rangle:=\{\sum_n a_n T^n  :  a_n p^{rn} \rightarrow 0 \text{ as }n\rightarrow \infty\}$ and for any $x, y \in \mathbb{B}^{1} (0, p^{-r})^{+}$ we have 

$$|g(x)-g(y)| \leq p^{r} |g|_{r} |x-y|.$$ 
Here $|\cdot|_r$ denotes the norm on $\overline{\mathbb{Q}}_p \langle \frac{T}{p^r}\rangle$ given by $|\sum a_n T^n|:=\text{max } |a_n p^{rn}|$ and $|\cdot|$ denotes the usual norm on $\overline{\mathbb{Q}}_p \langle T\rangle$.
\end{lemma}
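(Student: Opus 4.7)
The plan is to do a direct termwise estimate using the factorization
$$
x^n - y^n = (x-y)\bigl(x^{n-1} + x^{n-2}y + \dots + xy^{n-2} + y^{n-1}\bigr).
$$
Writing $g(T) = \sum_{n\geq 0} a_n T^n$ and subtracting termwise, I get
$$
g(x) - g(y) = (x-y)\sum_{n\geq 1} a_n \bigl(x^{n-1} + x^{n-2}y + \dots + y^{n-1}\bigr),
$$
so the claim reduces to bounding the sum on the right by $p^r |g|_r$ in the sup norm.

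The two ingredients are: (i) by the very definition of $|g|_r$, the coefficients satisfy $|a_n| \leq p^{rn} |g|_r$ for every $n\geq 0$; and (ii) since $x, y \in \mathbb{B}^1(0, p^{-r})^{+}$, each monomial in the geometric-sum factor satisfies $|x^{i} y^{n-1-i}| \leq p^{-r(n-1)}$, whence by the ultrametric inequality $|x^{n-1}+\dots+y^{n-1}| \leq p^{-r(n-1)}$. Combining the two gives
$$
\bigl|a_n (x^{n-1} + \dots + y^{n-1})\bigr| \leq p^{rn} |g|_r \cdot p^{-r(n-1)} = p^{r} |g|_r,
$$
uniformly in $n$. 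A second application of the ultrametric inequality to the outer sum then yields $|g(x)-g(y)| \leq p^{r} |g|_r \, |x-y|$, which is the desired bound.

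The only thing to check for rigour is that the series defining $g(x)-g(y)$ actually converges, but this is immediate from the convergence condition $|a_n p^{rn}|\to 0$ and $|x|,|y|\leq p^{-r}$, which gives $|a_n x^n|, |a_n y^n| \to 0$. There is no real obstacle here: the estimate is a routine consequence of the non-archimedean triangle inequality applied to the explicit factorization of $x^n-y^n$, and the factor $p^r$ on the right-hand side arises precisely because the definition of $|\cdot|_r$ rescales the variable $T$ by $p^r$.
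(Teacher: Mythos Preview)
Your proof is correct. It differs from the paper's argument, which instead reduces to the classical case $r=0$: the paper uses the isometric isomorphism $\alpha:\overline{\mathbb{Q}}_p\langle T/p^r\rangle\to\overline{\mathbb{Q}}_p\langle T\rangle$, $T\mapsto p^rT$, and then invokes the standard Lipschitz estimate $|f(\tilde x)-f(\tilde y)|\leq |f|\,|\tilde x-\tilde y|$ for $f\in\overline{\mathbb{Q}}_p\langle T\rangle$ from \cite{BGR84}, pulling back along $\alpha^*$ to recover the factor $p^r$. Your termwise factorization $x^n-y^n=(x-y)\sum_i x^iy^{n-1-i}$ is exactly how one proves that cited $r=0$ statement in the first place, so you have effectively unfolded the reference and handled the rescaling in one stroke. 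The paper's route makes transparent that the lemma is nothing more than a change of variables applied to a known fact; your route is self-contained and avoids the external citation. Either is perfectly adequate here.
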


\begin{proof}
Consider the map $\alpha: \overline{\mathbb{Q}}_p \langle \frac{T}{p^r}\rangle \rightarrow \overline{\mathbb{Q}}_p \langle T\rangle$ sending $T$ to $p^r T$; it is an isometric isomorphism with respect to the norms $|\cdot|_r$ and $|\cdot|$ respectively. Denote by $\alpha^{*}$ the induced map on maximal spectra, i.e. $\alpha^{*} : \mathbb{B}^{1} (0, 1)^{+}\rightarrow \mathbb{B}^{1} (0, p^{-r})^{+}$ sending $z$ to $p^r z$; it is bijective. Let $g\in\overline{\mathbb{Q}}_p \langle \frac{T}{p^r}\rangle$ such that $\alpha(g)=f$.\\
It is a classical result in the theory of Tate's algebras (see Prop. 7.2.1.1 in \cite{BGR84}) that for any $f\in \overline{\mathbb{Q}}_p \langle T\rangle$ and for any $\tilde x, \tilde y \in \mathbb{B}^{1} (0, 1)^{+}$:
$$
\begin{aligned}
|f(\tilde x)-f(\tilde y)|\leq |f| |\tilde x-\tilde y|.
\end{aligned}
$$
Since $\alpha$ is an isometry and defining $\alpha^{*}(\tilde x)= x$ and $\alpha^{*} ( \tilde y)=y$, this is equivalent to say:

$$
\begin{aligned}
&|\alpha(g)(\tilde x)-\alpha(g)(\tilde y)|\leq |f| \cdot |\tilde x-\tilde y|\\
\Longleftrightarrow &|g(\alpha^{*}(\tilde x))-g(\alpha^{*}(\tilde y))|\leq |\alpha(g)|\cdot |\tilde x-\tilde y|\\
\Longleftrightarrow &|g(x)-g(y)|\leq |g|_r \cdot |\tilde x- \tilde y|=|g|_r \cdot \Big|\frac{x}{p^r}- \frac{y}{p^r}\Big|=p^r |g|_r |x-y|.
\end{aligned}
$$
Note that we used the fact that $\alpha(g)(\tilde x)=g(\alpha^{*}(\tilde x))$ (and the same for $y$) which is a standard property of affinoid maps (see Lemma 7.1.4.2 in \cite{BGR84}).
\end{proof}
Finally, we can complete the proof of theorem \ref{weight}. Indeed, we have that the model $\mathcal{O}^{o}_{\mathcal{Z}_r}$ is isomorphc to $\mathbb{E} \langle \frac{T}{p^r}\rangle^{0} :=\{g \in \mathbb{E} \langle \frac{T}{p^r}\rangle : |g|_{\text{sup}}=|g|_r \leq 1\}$. Hence, for all $g\in \mathbb{E} \langle \frac{T}{p^r}\rangle^{0} $ we have:
$|g(x)-g(y)|\leq p^r |x-y| \;\;\; \text{ for all }\; x, y\in \overline{\mathbb{Z}}_p$ representing the corresponding maximal ideals in $\mathcal{Z}_r$. \\
For any fixed positive integer $m$ such that the hypothesis of the theorem holds, there exists a positive integer $n$, namely $n=m+r$, such that the representations $\mathbb{T}_{\mathcal{Z}_r}(1-k)$ and $\mathbb{T}_{\mathcal{Z}_r}(1-k')$ are congruent modulo $p^m$. By the definition of $\mathbb{T}_{\mathcal{Z}_r}$ we deduce that the same is true for $\mathbb{T}(u_{1-k})$ and $\mathbb{T}(u_{1-k'})$. This completes the proof of the theorem
\printbibliography[heading=bibintoc,title={References}]

\end{document}